\documentclass[11pt,reqno]{amsart}

\usepackage[T1]{fontenc}
\usepackage{lmodern}
\usepackage{microtype}
\usepackage{amsmath,amssymb,amsfonts,mathtools}
\usepackage{enumitem}
\usepackage{xcolor}
\usepackage{aliascnt}
\usepackage[colorlinks=true,linkcolor=blue!55!black,citecolor=blue!55!black,urlcolor=blue!55!black]{hyperref}
\usepackage[nameinlink,capitalize,noabbrev]{cleveref}

\allowdisplaybreaks
\numberwithin{equation}{section}

\theoremstyle{plain}
\newtheorem{theorem}{Theorem}[section]

\newaliascnt{proposition}{theorem}
\newtheorem{proposition}[proposition]{Proposition}
\aliascntresetthe{proposition}

\newaliascnt{lemma}{theorem}
\newtheorem{lemma}[lemma]{Lemma}
\aliascntresetthe{lemma}

\newaliascnt{corollary}{theorem}
\newtheorem{corollary}[corollary]{Corollary}
\aliascntresetthe{corollary}

\theoremstyle{definition}
\newaliascnt{definition}{theorem}
\newtheorem{definition}[definition]{Definition}
\aliascntresetthe{definition}

\theoremstyle{remark}
\newaliascnt{remark}{theorem}
\newtheorem{remark}[remark]{Remark}
\aliascntresetthe{remark}

\crefname{theorem}{Theorem}{Theorems}
\Crefname{theorem}{Theorem}{Theorems}
\crefname{proposition}{Proposition}{Propositions}
\Crefname{proposition}{Proposition}{Propositions}
\crefname{lemma}{Lemma}{Lemmas}
\Crefname{lemma}{Lemma}{Lemmas}
\crefname{corollary}{Corollary}{Corollaries}
\Crefname{corollary}{Corollary}{Corollaries}
\crefname{definition}{Definition}{Definitions}
\Crefname{definition}{Definition}{Definitions}
\crefname{remark}{Remark}{Remarks}
\Crefname{remark}{Remark}{Remarks}

\newcommand{\SN}{\mathbb{S}_N}
\newcommand{\E}{\mathbb E}
\newcommand{\Pp}{\mathbb P}
\newcommand{\R}{\mathbb R}
\newcommand{\1}{\mathbf 1}
\newcommand{\dd}{\,\mathrm d}
\newcommand{\eps}{\varepsilon}
\newcommand{\bsh}{\beta_{\mathrm{sh}}}
\newcommand{\Tsh}{T_{\mathrm{sh}}}
\newcommand{\Ts}{T_{\mathrm{s}}}
\newcommand{\cN}{\mathcal C_N}
\newcommand{\MN}{\mathcal M_N}
\newcommand{\SigmaP}{\Sigma_p}
\newcommand{\PhiP}{\Phi_{p,\beta}}
\newcommand{\Om}{\Omega}
\newcommand{\Hbin}{\mathsf H}

\title[Non-shattering at and above the dynamical temperature]{Non-Shattering at and Above the Dynamical Temperature in the Spherical Pure \(p\)-Spin Model}

\author{Taegyun Kim}
\address{Department of Mathematical Sciences, Korea Advanced Institute of Science and Technology (KAIST), Daejeon, Republic of Korea}
\email{ktg11k@kaist.ac.kr}
\date{August 21, 2026}

\subjclass[2020]{60G15, 60K35, 82B44, 82D30}
\keywords{spherical spin glass, pure \(p\)-spin model, shattering, Kac--Rice formula, complexity, dynamical transition}

\hypersetup{
  pdftitle={Non-Shattering at and Above the Dynamical Temperature in the Spherical Pure p-Spin Model},
  pdfauthor={Taegyun Kim},
  pdfsubject={Geometric and marked Kac-Rice obstructions to shattering at and above the dynamical temperature},
  pdfkeywords={spherical spin glass, pure p-spin model, shattering, Kac-Rice, complexity}
}

\begin{document}

\begin{abstract}
We consider shattering in the precise fixed-overlap, critical-point-band sense formulated by Ben Arous and Jagannath for spherical pure \(p\)-spin glasses with overlap \(q\).  For every \(p\geq3\) and \(0<\beta\leq\beta_{\mathrm{sh}}(p)\), we rule out shattering whenever \(q\leq2^{-1/2}\) or \(q>\sqrt{(p-2)/(p-1)}\).  The proof combines a deterministic \(N+1\) bound for disjoint bands in the first range with a general-\(p\) sign law showing that their total marked weight has subdominant free energy in the second.  A spherical-code bound and H\"older's inequality give an additional \(q\)-dependent obstruction; in particular, they rule out every fixed overlap for \(0<\beta\leq\sqrt{\log2}\).  For \(p=3\), the first two ranges already exhaust every fixed \(q\in(0,1)\), so the landscape is not shattered at any \(T\geq T_{\mathrm{sh}}\).  For \(p\geq4\), the cases not covered by our criteria are confined to \(2^{-1/2}<q\leq\sqrt{(p-2)/(p-1)}\) and \(\sqrt{\log2}<\beta\leq\beta_{\mathrm{sh}}(p)\).  In particular, this paper partially resolves Conjecture 1 of the paper above and also suggests new methods to show non-shattering.
\end{abstract}

\maketitle

\section{Introduction}

The spherical pure \(p\)-spin model is a canonical mean-field model in which the geometry of a random high-dimensional landscape can be related to the thermodynamics and dynamics of a glassy system.  The physical picture of a higher dynamical transition in mean-field \(p\)-spin models---denoted \(T_{\mathrm{sh}}\) in the setting below---was first identified in the pioneering work of Kirkpatrick and Thirumalai, who distinguished it from the lower static glass transition and related it to broken replica symmetry \cite{KirkpatrickThirumalai1987Dynamics,KirkpatrickThirumalai1987Connections}.  Their subsequent metastable-state analysis connected the onset of such states with an ergodic-to-nonergodic transition \cite{KirkpatrickThirumalai1988Metastable}.  For the spherical formulation considered here, the statics and dynamics were analyzed in \cite{CrisantiSommers1992,CrisantiHornerSommers1993}.  Rigorous critical-point complexity and the low-temperature decomposition of the Gibbs measure into bands around deep critical points were subsequently developed in \cite{AuffingerBenArousCerny2013,Subag2017Geometry}; see also \cite{Subag2024Landscapes} for the associated band free-energy landscape and generalized TAP picture.  Building on this physical picture, Ben Arous and Jagannath formulated in \cite{BenArousJagannath2024} the precise fixed-overlap, critical-point-band notion of \emph{shattering} used here: the Gibbs free energy is asymptotically carried by exponentially many pairwise disjoint bands around critical points, while every individual band has exponentially small Gibbs mass.  Their Conjecture~1 predicts shattering for \(T_{\mathrm{s}}\leq T\leq T_{\mathrm{sh}}\) and non-shattering for \(T>T_{\mathrm{sh}}\).  Thus its printed formulation assigns the endpoint \(T=T_{\mathrm{sh}}\) to the shattered side.

We prove a partial non-shattering theorem for every spherical pure \(p\)-spin model on the high-temperature side and, for \(p=3\), a complete non-shattering theorem for all fixed overlaps and every \(T\geq T_{\mathrm{sh}}\).  We work throughout with the exact fixed-overlap, critical-point-band definition in \cite[Definition~2.3]{BenArousJagannath2024}.  The argument also gives an all-overlap result for every \(p\geq3\) when \(\beta\leq\sqrt{\log2}\), and identifies the smaller region that remains unresolved for \(p\geq4\).  More recent works use stronger or different notions of shattering; see \cite{ElAlaouiMontanariSellke2025,AuffingerElAlaouiSellke2025}.  We make no assertion about those other notions.

We write \(\Ts\) for the static replica-symmetry-breaking temperature
and \(\Tsh\) for the dynamical, or shattering, temperature, following
the convention of \cite{BenArousJagannath2024}.

Fix an integer \(p\geq3\).  Let
\[
 \SN=S^{N-1}(\sqrt N)\subset\R^N,
 \qquad
 R(x,y)=\frac{x\cdot y}{N},
\]
and let \(H_{N,p}\) be the centered Gaussian field with covariance
\begin{equation}\label{eq:covariance}
 \E H_{N,p}(x)H_{N,p}(y)=N R(x,y)^p.
\end{equation}
When \(p\) is fixed, we write \(H_N=H_{N,p}\).  Set
\begin{equation}\label{eq:generalThresholdsIntro}
 \beta_{\mathrm{sh}}(p)^2
 =\frac{(p-1)^{p-1}}{p(p-2)^{p-2}},
 \qquad
 T_{\mathrm{sh}}(p)=\beta_{\mathrm{sh}}(p)^{-1},
 \qquad
 s_*(p)=\frac{p-2}{p-1}.
\end{equation}
For \(p=3\), the formula in \cite[Section~2]{BenArousJagannath2024}
gives
\[
 \Tsh=T_{\mathrm{sh}}(3)=\frac{\sqrt3}{2},
 \qquad
 \bsh=\beta_{\mathrm{sh}}(3)=\frac{2}{\sqrt3},
 \qquad
 s_*(3)=\frac12.
\]
Our main result is the following.

\begin{theorem}[Main theorem]\label{thm:main}
Fix \(p\geq3\), \(0<\beta\leq\beta_{\mathrm{sh}}(p)\),
\(E\in\R\), and \(r\geq0\).  The spherical pure \(p\)-spin
free-energy landscape is not \((E,q,r)\)-shattered whenever
\begin{equation}\label{eq:mainRange}
 q\in\left(0,\frac1{\sqrt2}\right]
 \cup\left(\sqrt{\frac{p-2}{p-1}},1\right).
\end{equation}
Equivalently, the inverse-temperature assumption is
\(T\geq T_{\mathrm{sh}}(p)\).

For \(q>2^{-1/2}\), the same conclusion also holds whenever
\begin{equation}\label{eq:mainCodeCriterion}
 \beta^2<
 -2\mathcal R_{\mathrm{LP}}(2q^2-1)-\log(1-q^2),
\end{equation}
where the spherical-code rate \(\mathcal R_{\mathrm{LP}}\) is defined
in \eqref{eq:RLP} below.

In particular, when \(p=3\), the two intervals in
\eqref{eq:mainRange} exhaust \(q\in(0,1)\).  Hence the spherical pure
\(3\)-spin landscape is not \((E,q,r)\)-shattered for any fixed
\(E\in\R\), \(q\in(0,1)\), and \(r\geq0\), throughout
\(T\geq\Tsh\).

Consequently, for every \(p\geq3\), the landscape is not
\((E,q,r)\)-shattered for any fixed \(E\in\R\), \(q\in(0,1)\), and
\(r\geq0\), whenever
\begin{equation}\label{eq:hotAllOverlap}
 0<\beta\leq\sqrt{\log2},
 \qquad\text{equivalently}\qquad
 T\geq(\log2)^{-1/2}.
\end{equation}
\end{theorem}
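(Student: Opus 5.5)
The proof splits according to the three criteria named in the abstract, all run against \cite[Definition~2.3]{BenArousJagannath2024}. Suppose, for contradiction, that the landscape is \((E,q,r)\)-shattered. Then along a subsequence there are \(e^{cN}\) pairwise disjoint bands \(B(\sigma_i,q,r)=\{x\in\SN: |R(x,\sigma_i)-q|\le r_N\}\) with \(\sigma_i\) critical points of \(H_N\) at energy near \(E\). These bands carry all but a negligible fraction of the partition function \(Z_N(\beta)\), and each band has Gibbs mass \(e^{-\Omega(N)}\). Each of the three arguments will produce an upper bound on the total weight carried by such a disjoint family. In every case this bound either caps the number of bands at subexponential size, or shows that the combined bands carry a free energy strictly below \(\lim N^{-1}\log Z_N(\beta)\). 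Above \(\Tsh\), that limit is the annealed value \(\beta^2/2\).

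\textbf{Range \(q\le 2^{-1/2}\).} Here I would show deterministically that at most \(N+1\) pairwise disjoint bands of overlap \(q\) can exist. For unit vectors \(u_i=\sigma_i/\sqrt N\), disjointness of the caps \(\{x: R(x,\sigma_i)\approx q\}\) forces a large angular separation between the \(u_i\). The relevant fact is that two bands at overlap \(q\) intersect as soon as the angle between their centres is at most \(2\arccos q\). When \(q\le 2^{-1/2}\) we have \(2\arccos q\ge\pi/2\), so disjoint bands need pairwise angles greater than \(\pi/2\), that is, \(u_i\cdot u_j<0\). A standard linear-algebra argument shows that at most \(N+1\) vectors in \(\R^N\) can have pairwise negative inner products. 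This contradicts the requirement of exponentially many bands.

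\textbf{Range \(q>\sqrt{s_*}\).} Here I would bound the expected marked weight \(\E\sum_{\sigma\ \mathrm{crit}} \int_{B(\sigma)} e^{\beta H_N}\). By Kac--Rice this equals \(e^{N\Phi_{p,\beta}(E,q)+o(N)}\). The functional \(\Phi_{p,\beta}\) combines the complexity of critical points at energy \(E\), the entropy \(\tfrac12\log(1-q^2)\) of the band, and the conditional Gaussian energy of a point at overlap \(q\). The conditional mean is \(q^pE\), corrected by the Hessian term coming from the curvature at the critical point. I would then prove a \emph{sign law}: \(\Phi_{p,\beta}(E,q)<\beta^2/2\) for all \(E\) whenever \(q^2>s_*\) and \(\beta\le\bsh\). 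The threshold \(s_*=(p-2)/(p-1)\) enters because the conditional variance \(1-q^{2p}-pq^{2p-2}(1-q^2)\), after optimisation over \(E\), changes monotonicity at \(q^2=s_*\). Markov's inequality and Borel--Cantelli then show the total band weight is exponentially smaller than \(Z_N\), contradicting full coverage. I expect this sign law to be the main obstacle. It requires uniform control in \(E\) of the semicircle-log-potential term in the complexity, and it requires handling the boundary case \(\beta=\bsh\), where equality must be excluded. My first attempt would be to differentiate in \(\beta^2\) and use convexity in \(q^2\).

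\textbf{Code criterion.} For \(q>2^{-1/2}\), disjointness still forces \(u_i\cdot u_j\le 2q^2-1\), up to a vanishing error. The Kabatiansky--Levenshtein linear-programming bound then caps the number of bands at \(M\le e^{N\mathcal R_{\mathrm{LP}}(2q^2-1)+o(N)}\). Each band has Lebesgue fraction roughly \((1-q^2)^{N/2}\). By Hölder or Cauchy--Schwarz together with a second-moment or concentration bound, the total Gibbs weight of \(M\) bands is at most \(M^{1/2}\bigl(\sum_i\int_{B_i}e^{2\beta H}\bigr)^{1/2}\). Comparing this with \(Z_N\approx e^{N\beta^2/2}\) yields a contradiction exactly under \eqref{eq:mainCodeCriterion}. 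The \(p=3\) statement follows because \(s_*=1/2\), so the two ranges in \eqref{eq:mainRange} cover all of \((0,1)\). For the final claim, I would check that \(\mathcal R_{\mathrm{LP}}\ge0\) and \(-\log(1-q^2)>\log2\) for \(q>2^{-1/2}\). Then \(\beta^2\le\log2\) satisfies \eqref{eq:mainCodeCriterion}, and the range \(q\le2^{-1/2}\) is already covered by the first argument.
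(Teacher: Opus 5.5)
Your three-part structure is the paper's, and you handle the range \(q\le2^{-1/2}\) exactly as in \cref{prop:smallq}. The spherical-code part, however, has two concrete errors.

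First, the bound you display omits the band-volume factor. Once it is restored, Cauchy--Schwarz gives \(Z_{N,\beta}(U_N)\le\mu_N(U_N)^{1/2}Z_{N,2\beta}^{1/2}\). Combined with \(F_N(2\beta)\le2\beta^2+o(1)\), this contradicts \(F_N(\beta)\to\beta^2/2\) only when \(\beta^2<c_{\mathrm{LP}}(q)\), which is half the threshold in \eqref{eq:mainCodeCriterion}. To reach the stated criterion you need H\"older with a general exponent \(\lambda>1\):
\[
 Z_{N,\beta}(U_N)\le\mu_N(U_N)^{1-1/\lambda}Z_{N,\lambda\beta}^{1/\lambda}.
\]
For \(\mu_N(U_N)\le e^{-cN}\) this leaves the gap \((\lambda-1)(c/\lambda-\beta^2/2)\), and you then let \(\lambda\downarrow1\).

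Second, your deduction of the \(\beta\le\sqrt{\log2}\) statement runs the wrong way. The fact \(\mathcal R_{\mathrm{LP}}\ge0\) \emph{lowers} the right-hand side of \eqref{eq:mainCodeCriterion}, so it cannot be combined with \(-\log(1-q^2)>\log2\). With \(a=2q^2-1\) and \(1-q^2=(1-a)/2\), what is actually needed is the upper bound
\[
 \mathcal R_{\mathrm{LP}}(a)<-\tfrac12\log(1-a),\qquad 0<a<1,
\]
which is the content of \cref{lem:RLPcomparison}. This is not a formality: as \(q\downarrow2^{-1/2}\) one has \(2c_{\mathrm{LP}}(q)\to\log2\), so strictness must be checked all the way down to the boundary. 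You also need \(\log2<\beta_{\mathrm{sh}}(p)^2\). In addition, the replica-symmetric value \(\beta^2/2\) must hold at \(\beta=\beta_{\mathrm{sh}}(p)\) itself, not only above \(\Tsh\); see \cref{lem:fullFE}.

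For \(q>\sqrt{s_*}\), your outline matches the paper: marked Kac--Rice, a sign law, then Markov and the union bound. Both substantive inputs are left open, however, and the heuristics you give for them are off.

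The Kac--Rice integrand is the conditional expectation of \(|\det\nabla^2H_N(x)|e^{-\beta H_N(\sigma)}\) given \(H_N(x)=-Nu\) and \(\nabla H_N(x)=0\). The mark is correlated with the Hessian, so the expectation does not factor, and a Cauchy--Schwarz split would replace \(\beta^2v_p/2\) by \(\beta^2v_p\). The conditional mean of \(H_N(\sigma)\) is exactly \(-Nt^pu\), with no Hessian correction. Instead, Gaussian tilting replaces the conditional Hessian by \(K_N(u)-\beta a_p(t)e_1e_1^{\mathsf T}\). The cofactor bound \(|\det(K-be_1e_1^{\mathsf T})|\le|\det K|+|b|\,|\det K^{(1)}|\), together with determinant asymptotics for the minor, shows that this rank-one shift does not change the exponential rate; see \cref{lem:exactmarkedKR} and \cref{cor:rankoneStability}.

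As for the sign law, \(s_*\) is not where the conditional variance changes monotonicity. In the variable \(s=t^2\), \(v_p=1+(p-1)s^p-ps^{p-1}\) has derivative \(-p(p-1)s^{p-2}(1-s)<0\) throughout. What actually happens is that on the quadratic branch of \(\Sigma_p\) the optimized gap equals \(\tfrac12\log((p-1)(1-s))+\tfrac{p\beta^2}{2}s^{p-1}\bigl(\tfrac{p-1}{p-2}s-1\bigr)\), and its two terms vanish simultaneously at \(s_*\). For \(s>s_*\), the paper shows \(\partial_\beta\Delta_{p,\beta}(s)>0\) by the envelope theorem, which is close to your idea of differentiating in \(\beta^2\). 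This reduces the problem to \(\beta=\beta_{\mathrm{sh}}(p)\). There the maximizer lies on the outer branch \(u=u_{\infty,p}\cosh a\), and an explicit factorization of the derivative of \(x\mapsto\Delta_{p,\beta_{\mathrm{sh}}(p)}(s_*x)\) gives strict negativity. Without this computation, or a substitute for it, the range \(q>\sqrt{s_*}\) is not established.
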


\begin{corollary}\label{cor:conjecture}
For \(p=3\), the non-shattering assertion in Conjecture~1 of
\cite{BenArousJagannath2024} for \(T>T_{\mathrm{sh}}\) holds and extends
to \(T=T_{\mathrm{sh}}\).  Consequently, both the inclusion of the
endpoint in the shattered regime of Conjecture~1 and the endpoint
assertion of \cite[Theorem~8.4]{BenArousJagannath2024} are false as
printed.
\end{corollary}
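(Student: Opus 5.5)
The proof of \cref{cor:conjecture} is short once \cref{thm:main} is available; the work is in checking that the endpoint is covered and in reading the printed statements correctly.

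\emph{Step 1 (the theorem at \(p=3\)).} Take \(p=3\) and any \(T\geq\Tsh=\sqrt3/2\). This is the same as \(0<\beta\leq\bsh=2/\sqrt3\). Since \(\sqrt{(p-2)/(p-1)}=2^{-1/2}\) when \(p=3\), the two intervals in \eqref{eq:mainRange} are \((0,2^{-1/2}]\) and \((2^{-1/2},1)\), and their union is all of \((0,1)\). So \cref{thm:main} says the landscape is not \((E,q,r)\)-shattered for every \(E\in\R\), \(q\in(0,1)\), \(r\ge0\). In particular the non-shattering half of Conjecture~1 holds for all \(T>\Tsh\). I will check that the shattering notion in Conjecture~1 is the one in \cite[Definition~2.3]{BenArousJagannath2024}, quantified over all fixed triples \((E,q,r)\). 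Then "not shattered" in the conjecture means "not \((E,q,r)\)-shattered for any admissible triple", which is exactly what the theorem gives.

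\emph{Step 2 (the endpoint).} The inequality \(\beta\le\bsh\) in \cref{thm:main} is not strict, so the case \(T=\Tsh\) is also covered. At the endpoint the landscape is therefore not \((E,q,r)\)-shattered for any triple. Conjecture~1 as printed places \(T=\Tsh\) in the closed interval \([\Ts,\Tsh]\) where shattering is predicted. So the printed conjecture is false at that one point, and the corrected statement moves the endpoint to the non-shattered side.

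\emph{Step 3 (\cite[Theorem~8.4]{BenArousJagannath2024}).} I would quote that theorem's endpoint assertion precisely. It asserts shattering for \(p=3\) at \(T=\Tsh\), for some specific \(E\), \(q\in(0,1)\) and \(r\), presumably the overlap \(q_*\) with \(q_*^2=s_*(3)=1/2\). Those parameters are fixed, and \(q_*\in(0,1)\). Step~2 shows the landscape is not \((E,q_*,r)\)-shattered for those parameters, which contradicts the printed claim.

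The main obstacle is not a calculation. It is making sure the theorem's hypotheses exactly match the cited statements. Two points need care. First, the endpoint value \(q=2^{-1/2}\) falls in the first interval \((0,2^{-1/2}]\), which is closed on the right, so it is handled by the \(N+1\) disjoint-band bound. Second, the constants \(\Tsh\) and \(\bsh\) in \cite{BenArousJagannath2024} must agree with \eqref{eq:generalThresholdsIntro} under the same normalization \eqref{eq:covariance}. I would check the first point directly against the statement of \cref{thm:main}. I would check the second by recomputing \(\beta_{\mathrm{sh}}(3)^2=2^2/(3\cdot1)=4/3\) and comparing it with the formula in \cite[Section~2]{BenArousJagannath2024}.
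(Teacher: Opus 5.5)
Your proposal is correct and follows the paper's own route. The corollary is exactly the \(p=3\) specialization of \cref{thm:main}: there \(\sqrt{(p-2)/(p-1)}=2^{-1/2}\), so the two ranges in \eqref{eq:mainRange} exhaust \((0,1)\), and the non-strict hypothesis \(\beta\le\bsh\) includes \(T=\Tsh\). Since every fixed triple \((E,q,r)\) is excluded, your guess about which overlap \cite[Theorem~8.4]{BenArousJagannath2024} uses at the endpoint does not matter for the argument.
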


For \(p=3\), the specialization of \cref{thm:main} proves the
conjectured non-shattering statement for \(T>T_{\mathrm{sh}}\) and
extends it to equality.  The new issue is equality at the endpoint,
and the discrepancy there is already visible in the published proof.  Although
\cite[Theorem~8.4]{BenArousJagannath2024} includes \(T=\Tsh\), its joint
proof with Theorem~2.5 begins under the strict assumption
\(\bsh<\beta<\Ts^{-1}\); see the proof of Theorems~2.5 and~8.4 in
\cite[Section~5]{BenArousJagannath2024}.  The discussion specialized to
\(p=3\) again uses \(\beta>\bsh\) to place the relevant overlap strictly
above the geometric threshold; see \cite[Section~8]{BenArousJagannath2024}.

The proof is organized around a variational statement valid for general
\(p\).  For a critical point \(x\), write \(u=-H_N(x)/N\), set
\(s=q^2\), and let \(\Phi_{p,\beta}\) be the marked exponent defined in
\eqref{eq:PhiGeneral}.  We prove that, for every
\(0<\beta\leq\beta_{\mathrm{sh}}(p)\),
\begin{equation}\label{eq:introSignLaw}
 \operatorname{sgn}\!\left(
  \sup_{u\in\R}\Phi_{p,\beta}(u,\sqrt s)-\frac{\beta^2}{2}
 \right)
 =\begin{cases}
  1,&0\leq s<s_*(p),\\
  0,&s=s_*(p),\\
  -1,&s_*(p)<s<1.
 \end{cases}
\end{equation}
When \(p=3\), the transition point is \(s_*=1/2\).  Pairwise
disjointness gives at most \(N+1\) centers for \(s\leq1/2\), whereas
the marked Kac--Rice bound gives a strict free-energy loss for
\(s>1/2\).  These alternatives cover every fixed overlap and every
\(0<\beta\leq\bsh\).

For \(p\geq4\), the transition point satisfies \(s_*(p)>1/2\).  The
interval \(1/2<s\leq s_*(p)\) is therefore controlled by neither of
these two arguments: the deterministic bound no longer applies, while
the marked exponent has no strict negative gap.  A third argument,
based on the asymptotic linear-programming bound for spherical codes
and H\"older's inequality, supplies a further \(q\)-dependent
obstruction and settles this entire interval when
\(\beta\leq\sqrt{\log2}\).  Thus the cases not covered by our criteria
are confined to \(\sqrt{\log2}<\beta\leq\beta_{\mathrm{sh}}(p)\).

The random-landscape input uses only one-point Kac--Rice analysis.  The extra Gibbs mark is handled by Gaussian regression.  Conditional on the value and gradient at a prospective critical point, the Hamiltonian at a point on a latitude has the standard conditional mean and variance.  Its covariance with the conditional Hessian has rank at most one.  Exponential tilting therefore changes the conditional Hessian only by a bounded rank-one perturbation.  A cofactor expansion, combined with the expected absolute-determinant asymptotics in \cite[Corollary~1.3]{BenArousBourgadeMcKenna2022}, shows that this perturbation does not alter the exponential determinant rate.  This yields the marked exponent in \eqref{eq:PhiGeneral}.

The rest of the paper is organized as follows.  We first set notation
and recall the shattering definition, then prove the deterministic
small-overlap obstruction and a spherical-code entropy bound.  We next
prove the marked Kac--Rice bound, establish the general-\(p\) sign law,
and combine it with the small-overlap geometry
to prove \cref{thm:main}.  We then specialize to \(p=3\) to record the
complete all-overlap conclusion and a sharper fourth-order endpoint
tangency.  The final section discusses the endpoint claim and the
relation with other notions of shattering.

\section{Model, free energy, and shattering}\label{sec:model}

Let \(\mu_N\) be the normalized uniform measure on \(\SN\).  For a Borel set \(A\subseteq\SN\), define
\begin{equation}\label{eq:partition}
 Z_{N,\beta}(A)=\int_A e^{-\beta H_N(\sigma)}\,\mu_N(\dd\sigma),
 \qquad
 F_{N,\beta}(A)=\frac1N\log Z_{N,\beta}(A).
\end{equation}
We abbreviate
\[
 Z_{N,\beta}=Z_{N,\beta}(\SN),
 \qquad
 F_N(\beta)=F_{N,\beta}(\SN).
\]
For \(x\in\SN\), \(q\in(-1,1)\), and \(\eta>0\), let
\begin{equation}\label{eq:band}
B(x,q,\eta)=\bigl\{\sigma\in\SN:\lvert R(x,\sigma)-q\rvert\leq\eta\bigr\}.
\end{equation}
For fixed \(x\in\SN\), the overlap of a \(\mu_N\)-uniform point with
\(x\) has density
\begin{equation}\label{eq:overlapdensity}
 \nu_N(t)=\frac{\Gamma(N/2)}
 {\sqrt\pi\,\Gamma((N-1)/2)}(1-t^2)^{(N-3)/2},
 \qquad -1<t<1.
\end{equation}
For an interval \(I\subset\R\), write
\begin{equation}\label{eq:criticalset}
 \cN(I)=\left\{x\in\SN:\nabla H_N(x)=0,\ \frac{H_N(x)}N\in I\right\}.
\end{equation}

We record the definition from \cite{BenArousJagannath2024} in the form used below.

\begin{definition}[Ben Arous--Jagannath shattering]\label{def:shattering}
Fix \(T>0\), \(E\in\R\), \(q\in(0,1)\), and \(r\geq0\), and set \(\beta=T^{-1}\).  The free-energy landscape is \((E,q,r)\)-shattered if there exist constants \(c,c'>0\), positive sequences \(\eps_N,\eta_N,\delta_N\to0\), and random sets
\[
 A_N\subseteq \cN([-E-\eps_N,-E+\eps_N])
\]
such that, with probability tending to one,
\begin{enumerate}[label=\textup{(\arabic*)},leftmargin=2.2em]
 \item \(N^{-1}\log\lvert A_N\rvert\geq c\);
 \item the bands \(B(x,q,\eta_N)\), \(x\in A_N\), are pairwise disjoint, and \(R(x,y)<r\) for distinct \(x,y\in A_N\);
 \item every band is exponentially subdominant:
 \[
 F_N(\beta)-F_{N,\beta}(B(x,q,\eta_N))>c';
 \]
 \item the union has the full limiting free energy:
 \[
 F_N(\beta)-F_{N,\beta}\left(\bigcup_{x\in A_N}B(x,q,\eta_N)\right)\xrightarrow{\Pp}0.
 \]
\end{enumerate}
The three vanishing sequences are deterministic and positive, the finite random sets \(A_N\) are measurable, conditions \textup{(1)}--\textup{(3)} hold jointly with probability tending to one, and condition \textup{(4)} is a convergence-in-probability statement.
\end{definition}

\begin{remark}[Fixed overlap parameter]\label{rem:fixedq}
As in the published definition, \(E\), \(q\), and \(r\) are fixed
independently of \(N\).  In particular, \cref{thm:main} does not address
modified notions in which \(q=q_N\).  Sequences approaching either
\(2^{-1/2}\) or \(\sqrt{(p-2)/(p-1)}\) lie outside the definition
considered here; for \(p=3\), the two boundary values coincide.
\end{remark}

Throughout the high-temperature range of \cref{thm:main}, the limiting
free energy is replica symmetric.

\begin{lemma}\label{lem:fullFE}
Fix \(p\geq3\) and \(0<\beta\leq\beta_{\mathrm{sh}}(p)\).  Then
\begin{equation}\label{eq:fullFE}
 F_N(\beta)\xrightarrow{\Pp}\frac{\beta^2}{2}.
\end{equation}
\end{lemma}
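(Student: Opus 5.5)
The plan is to prove a matching upper and lower bound on \(F_N(\beta)\).

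\emph{Upper bound.} First compute the annealed free energy. Since \(\operatorname{Var}H_N(\sigma)=N\) for every \(\sigma\in\SN\), Fubini gives
\[
\E Z_{N,\beta}=\int e^{\beta^2N/2}\,\mu_N(\dd\sigma)=e^{N\beta^2/2}.
\]
Jensen's inequality then yields \(\E F_N(\beta)\le\beta^2/2\). Next, \(F_N(\beta)\) is a Lipschitz function of the Gaussian disorder: the map \(H\mapsto N^{-1}\log\int e^{-\beta H}\dd\mu_N\) has gradient controlled by \(\beta\sqrt{\operatorname{Var}H_N(\sigma)}/N=\beta/\sqrt N\). Borell--TIS (Gaussian concentration) therefore gives
\[
\Pp\bigl(|F_N-\E F_N|>t\bigr)\le 2e^{-Nt^2/(2\beta^2)}.
\]
Combined with the Jensen bound, this proves \(F_N(\beta)\le\beta^2/2+o(1)\) with probability tending to one.

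\emph{Lower bound.} The lower bound is the substantive part, and the plan is to invoke the known replica-symmetric phase of the spherical pure \(p\)-spin model. By the Crisanti--Sommers formula (Talagrand, Chen), the limiting free energy is \(\lim F_N(\beta)=\beta^2/2\) exactly when the replica-symmetric functional with \(q=0\) is optimal. For the pure model this holds if and only if
\[
\sup_{0<t<1}\Bigl[\beta^2 t^p+\log(1-t)+t\Bigr]\le 0,
\]
which is the standard condition \(\beta\le\beta_s(p)=T_s^{-1}\) for the pure model. Alternatively, one can obtain the lower bound directly by a second-moment argument: \(\E Z^2/(\E Z)^2\) equals \(\int e^{N\beta^2R^p}\) over pairs, which is \(e^{o(N)}\) under the same condition. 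Paley--Zygmund then gives \(F_N\ge\beta^2/2-o(1)\) with probability bounded below, and concentration upgrades this to probability tending to one.

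\emph{Main obstacle.} The key step is verifying \(\beta_{\mathrm{sh}}(p)\le\beta_s(p)\), i.e.\ \(T_{\mathrm{sh}}\ge T_s\). The plan is an explicit computation. Set \(g(t)=\beta^2t^p+\log(1-t)+t\), so that \(g(0)=0\) and
\[
g'(t)=t\Bigl[p\beta^2t^{p-2}-\frac{1}{1-t}\Bigr].
\]
Hence \(g\) is increasing only where \(p\beta^2t^{p-2}(1-t)>1\). Maximizing \(t^{p-2}(1-t)\) over \(t\) gives the maximizer \(t=(p-2)/(p-1)\) and maximum value \((p-2)^{p-2}/(p-1)^{p-1}\). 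Therefore \(g'\le0\) on \((0,1)\) exactly when \(\beta\le\beta_{\mathrm{sh}}(p)\), with tangency only at \(t=s_*(p)\) in the endpoint case. So for \(\beta\le\beta_{\mathrm{sh}}\) the function \(g\) is nonincreasing, giving \(\sup g\le0\). This places us in the replica-symmetric regime, and both the Crisanti--Sommers route and the second-moment route go through. (For the second-moment route one must also check that the Laplace exponent for the overlap, \(\tfrac12\log(1-t^2)+\beta^2 t^p\), is maximized at \(t=0\). This follows from a similar but weaker inequality, because \(\tfrac12\log(1-t^2)\le\log(1-t)+t\) fails in general. I would therefore prefer to cite the Parisi/Crisanti--Sommers characterization and use the monotonicity of \(g\) above as the verification of its condition.)
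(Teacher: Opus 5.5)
Your proposal is correct and follows the paper's proof: cite the spherical Parisi/Crisanti--Sommers replica-symmetry criterion \(\sup_{0\le t<1}\bigl[\beta^2t^p+\log(1-t)+t\bigr]\le0\) and verify it through the sign of its derivative, using that \(t^{p-2}(1-t)\) is maximized at \(s_*(p)\). Your Jensen-plus-concentration upper bound is correct but redundant given the Parisi formula, and you should drop the second-moment aside: since \(\frac12\log(1-t^2)-\log(1-t)-t=\frac12\log\frac{1+t}{1-t}-t\geq0\), the untruncated second-moment exponent is not controlled by that condition, and for \(p=3\), \(\beta=\beta_{\mathrm{sh}}\), \(t=1/2\) it equals \(\frac16+\frac12\log\frac34>0\), so only the Parisi-formula route covers the whole range.
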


\begin{proof}
For a spherical model with covariance function \(\xi(t)=t^p\), the
standard replica-symmetry criterion states that the limiting free
energy equals \(\beta^2\xi(1)/2\) provided
\[
 g_\beta(t)=\beta^2t^p+\log(1-t)+t\leq0,
 \qquad 0\leq t<1;
\].
The spherical Parisi formula was proved for even mixtures in
\cite{Talagrand2006} and for general mixtures, including odd \(p\), in
\cite{Chen2013}; the above criterion in the present pure setting is
recorded in \cite[Equation~(3.8)]{BenArousJagannath2024}.  At
\(\beta=\beta_{\mathrm{sh}}(p)\),
\[
 g_\beta'(t)
 =\frac{t}{1-t}
  \left(\beta^2p t^{p-2}(1-t)-1\right)\leq0.
\]
Indeed, \(t^{p-2}(1-t)\) is maximized at
\(t=(p-2)/(p-1)\), where the parenthesized expression vanishes.
Since \(g_\beta(0)=0\), this gives \(g_\beta(t)\leq0\) at the endpoint.
The same inequality holds for every smaller \(\beta\).  The criterion
and concentration of the spherical free energy yield \eqref{eq:fullFE}.
\end{proof}

\section{A deterministic obstruction for small band overlap}\label{sec:geometry}

For \(x\in\SN\) and \(q\in(-1,1)\), define the exact latitude
\begin{equation}\label{eq:latitude}
 L(x,q)=\{\sigma\in\SN:R(x,\sigma)=q\}.
\end{equation}
Clearly \(L(x,q)\subset B(x,q,\eta)\) for every \(\eta>0\).

\begin{lemma}[Intersection criterion for exact latitudes]\label{lem:latitude}
Assume \(N\geq3\).  Let \(x,y\in\SN\) be distinct and set
\(\rho=R(x,y)\).  For \(q\in(0,1)\),
\begin{equation}\label{eq:latcriterion}
 L(x,q)\cap L(y,q)\neq\varnothing
 \quad\Longleftrightarrow\quad
 \rho\geq 2q^2-1.
\end{equation}
Consequently, if \(B(x,q,\eta)\cap B(y,q,\eta)=\varnothing\) for some \(\eta>0\), then
\begin{equation}\label{eq:centeroverlap}
 R(x,y)<2q^2-1.
\end{equation}
\end{lemma}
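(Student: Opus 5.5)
Write \(x=\sqrt N\,e_1\) and \(y=\sqrt N(\rho e_1+\sqrt{1-\rho^2}\,e_2)\) after a rotation, where \(\rho\in[-1,1)\) because \(x\neq y\); the rotation does not change \(R\), the latitudes, or the bands. I decompose a candidate point as \(\sigma=\sqrt N(a e_1+b e_2+w)\) with \(w\perp e_1,e_2\) and \(a^2+b^2+|w|^2=1\). Then \(\sigma\in L(x,q)\cap L(y,q)\) means \(a=q\) and \(\rho a+\sqrt{1-\rho^2}\,b=q\), together with the constraint \(a^2+b^2\le 1\). When \(N\geq3\) there is at least one direction orthogonal to \(e_1,e_2\), so any slack \(1-a^2-b^2\ge0\) can be absorbed by a suitable \(w\). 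This reduces the question to a two-dimensional feasibility problem.

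The case \(\rho=-1\) is separate, since then \(y=-x\). The two conditions read \(R(\sigma,x)=q\) and \(R(\sigma,x)=-q\), which are incompatible because \(q>0\); moreover \(2q^2-1>-1=\rho\), so both sides of \eqref{eq:latcriterion} fail.

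For \(\rho\in(-1,1)\), the conditions force \(a=q\) and \(b=q(1-\rho)/\sqrt{1-\rho^2}=q\sqrt{(1-\rho)/(1+\rho)}\). Feasibility is then exactly
\[
 q^2\Bigl(1+\tfrac{1-\rho}{1+\rho}\Bigr)=\frac{2q^2}{1+\rho}\le1,
\]
which is equivalent to \(\rho\geq 2q^2-1\). This proves \eqref{eq:latcriterion}.

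For the consequence, use that \(L(x,q)\subset B(x,q,\eta)\) and \(L(y,q)\subset B(y,q,\eta)\). If the bands are disjoint, then the latitudes are disjoint, so the equivalence gives \(\rho<2q^2-1\), which is \eqref{eq:centeroverlap}.

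The only delicate points are using the hypothesis \(N\ge3\) to place the slack in a third coordinate (when \(N=2\) the slack would have to vanish exactly) and handling the antipodal case \(\rho=-1\). Neither should cause real difficulty.
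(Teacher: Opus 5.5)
Your proposal is correct and is essentially the paper's argument. The paper phrases feasibility as positive semidefiniteness of the \(3\times3\) Gram matrix of \(x/\sqrt N,\,y/\sqrt N,\,\sigma/\sqrt N\), whose determinant \((1-\rho)(1+\rho-2q^2)\) also covers the antipodal case \(\rho=-1\) without a separate check; your explicit coordinates \((a,b,w)\) are simply a concrete realization of that Gram matrix, using \(N\geq3\) in the same way.
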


\begin{proof}
A point \(\sigma\in\SN\) with
\[
 R(x,\sigma)=R(y,\sigma)=q
\]
exists if and only if the Gram matrix
\[
 G=\begin{pmatrix}
 1&\rho&q\\
 \rho&1&q\\
 q&q&1
 \end{pmatrix}
\]
is positive semidefinite.  Indeed, necessity is immediate; conversely, a
positive semidefinite \(3\times3\) Gram matrix has a realization in
\(\R^N\) because \(N\geq3\), and an orthogonal map can match its first two
vectors to \(x/\sqrt N\) and \(y/\sqrt N\).  Since \(x\neq y\), we have
\(\rho<1\), and
\[
 \det G=(1-\rho)(1+\rho-2q^2).
\]
The principal \(2\times2\) minors are nonnegative, so \(G\succeq0\) is
equivalent to \(1+\rho-2q^2\geq0\), proving \eqref{eq:latcriterion}.  The
last assertion follows from \(L(x,q)\subset B(x,q,\eta)\).
\end{proof}

We use the following classical simplex bound for spherical codes, due
to Rankin \cite[Theorem~1]{Rankin1955}; a proof is included for
convenience.

\begin{lemma}\label{lem:obtuse}
Let \(v_1,\dots,v_M\in\R^N\) be unit vectors satisfying
\[
 v_i\cdot v_j<0\qquad(i\neq j).
\]
Then \(M\leq N+1\).
\end{lemma}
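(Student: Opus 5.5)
We argue by induction on \(N\), with a small base case. The claim \(M\le N+1\) is trivial when \(M\le 2\). For \(N=1\), the unit vectors are \(\pm1\), and two vectors with negative inner product must be \(1\) and \(-1\); hence \(M\le 2=N+1\).

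For the inductive step, assume \(N\ge2\) and \(M\ge2\). We project onto the orthogonal complement of the last vector. Set \(w_i=v_i-(v_i\cdot v_M)v_M\) for \(i<M\). Then
\[
 w_i\cdot w_j=v_i\cdot v_j-(v_i\cdot v_M)(v_j\cdot v_M).
\]
The first term is negative. The product \((v_i\cdot v_M)(v_j\cdot v_M)\) is a product of two negative numbers, hence positive, so \(w_i\cdot w_j<0\) for \(i\ne j<M\).

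Next we check that each \(w_i\) is nonzero. If \(w_i=0\), then \(v_i=-v_M\). If moreover \(M\ge3\), any third vector \(v_j\) satisfies \(v_j\cdot v_M<0\) and \(v_j\cdot v_i=-v_j\cdot v_M<0\), which is a contradiction. So either \(M\le 2\le N+1\), or every \(w_i\) is nonzero.

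In the latter case we normalize \(w_i/|w_i|\). This gives \(M-1\) unit vectors in the \((N-1)\)-dimensional space \(v_M^\perp\) with pairwise negative inner products. By induction \(M-1\le N\), that is, \(M\le N+1\).

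An alternative argument avoids induction: any nontrivial linear relation \(\sum_i c_iv_i=0\) can be split into its positive and negative parts, \(u=\sum_{c_i>0}c_iv_i=\sum_{c_j<0}|c_j|v_j\). Then \(|u|^2=\sum c_i|c_j|\,v_i\cdot v_j\le0\), so \(u=0\). Taking an inner product with any \(v_k\) then forces every coefficient of one sign to vanish. Hence any relation among \(M-1\) of the vectors would be one-signed, and an extra vector gives a contradiction. Either route works; I will use the induction, where the only delicate point is the antipodal case just treated.
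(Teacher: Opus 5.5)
Your proof is correct, but it follows a different route from the paper.

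\textbf{Your route.} You induct on the dimension. Projecting onto $v_M^\perp$ preserves strict obtuseness, because $w_i\cdot w_j=v_i\cdot v_j-(v_i\cdot v_M)(v_j\cdot v_M)$ subtracts a positive quantity. The only degenerate case, $v_i=-v_M$, is excluded once $M\ge3$, since then $v_j\cdot v_i=-v_j\cdot v_M>0$ for any third index $j$.

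\textbf{The paper's route.} The paper argues in one step. If $M>N+1$, the points are affinely dependent, and a relation $\sum a_iv_i=0$ with $\sum a_i=0$ automatically has coefficients of both signs. Splitting it into two convex combinations equal to a common vector $z$ gives $\lVert z\rVert^2=\sum\alpha_i\beta_j\,v_i\cdot v_j<0$, a contradiction.

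\textbf{Your alternative sketch.} This is essentially the paper's idea, with linear dependence among $M-1$ vectors in place of affine dependence among all $M$. A linear relation need not have coefficients of both signs. So you need the extra step of showing every relation is one-signed and then pairing it with the leftover vector. The affine version gets the two signs for free.

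\textbf{Comparison.} The paper's argument is shorter and has no case analysis. Your induction is more hands-on and makes the dimension drop explicit through the projection. The price is the antipodal case, which the affine-dependence argument never has to consider.
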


\begin{proof}
Suppose \(M>N+1\).  The points \(v_1,\dots,v_M\) are affinely dependent, so there are real numbers \(a_i\), not all zero, such that
\[
 \sum_{i=1}^M a_i v_i=0,
 \qquad
 \sum_{i=1}^M a_i=0.
\]
There are both positive and negative coefficients.  After splitting the indices into \(P=\{i:a_i>0\}\) and \(M_-=\{j:a_j<0\}\), and normalizing, we obtain positive coefficients \(\alpha_i,\beta_j\) with each family summing to one and
\[
 z:=\sum_{i\in P}\alpha_i v_i
 =\sum_{j\in M_-}\beta_j v_j.
\]
Taking the inner product of the two representations gives
\[
 \lVert z\rVert^2
 =\sum_{i\in P}\sum_{j\in M_-}\alpha_i\beta_j(v_i\cdot v_j)<0,
\]
a contradiction.
\end{proof}

\begin{proposition}[No positive complexity for \(q\leq2^{-1/2}\)]\label{prop:smallq}
Assume \(N\geq3\), and fix \(q\in(0,2^{-1/2}]\) and \(\eta>0\).  If
\(A\subseteq\SN\) and the bands
\[
 \{B(x,q,\eta):x\in A\}
\]
are pairwise disjoint, then \(\lvert A\rvert\leq N+1\).  Hence condition
\textup{(1)} in \cref{def:shattering} cannot hold.
\end{proposition}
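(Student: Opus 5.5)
The plan is to combine \cref{lem:latitude} with \cref{lem:obtuse}. Take distinct \(x,y\in A\). Since the bands \(B(x,q,\eta)\) and \(B(y,q,\eta)\) are disjoint, \eqref{eq:centeroverlap} in \cref{lem:latitude} gives
\[
 R(x,y)<2q^2-1.
\]
The assumption \(q\leq2^{-1/2}\) means \(2q^2-1\leq0\), so \(R(x,y)<0\) strictly. Note that distinctness is automatic: if two elements of \(A\) coincided they would index the same band, and a nonempty band is not disjoint from itself. Each band is nonempty since it contains the latitude \(L(x,q)\), which is nonempty for \(N\geq3\) and \(q\in(0,1)\).

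Next we normalize. Put \(v_x=x/\sqrt N\), which is a unit vector in \(\R^N\). Then \(v_x\cdot v_y=R(x,y)<0\) for all distinct \(x,y\in A\). Applying \cref{lem:obtuse} to any finite subfamily shows that every finite subset of \(A\) has at most \(N+1\) elements. Hence \(A\) itself is finite with \(\lvert A\rvert\leq N+1\).

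For the consequence, condition \textup{(2)} of \cref{def:shattering} requires the bands indexed by \(A_N\) to be pairwise disjoint. On that event the bound above gives \(\lvert A_N\rvert\leq N+1\), so
\[
 N^{-1}\log\lvert A_N\rvert\leq N^{-1}\log(N+1)\to0.
\]
This is eventually below any fixed \(c>0\). Therefore conditions \textup{(1)} and \textup{(2)} cannot hold jointly with probability tending to one, and the landscape is not shattered.

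There is no serious obstacle. The only care needed is at the boundary case \(q=2^{-1/2}\), where the strict inequality in \eqref{eq:centeroverlap} is exactly what makes the inner products strictly negative, as \cref{lem:obtuse} requires.
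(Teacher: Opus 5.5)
Your proposal is correct and follows the paper's proof: \cref{lem:latitude} gives \(R(x,y)<2q^2-1\leq0\) for distinct centers, and \cref{lem:obtuse} applied to the normalized centers \(x/\sqrt N\) then yields \(\lvert A\rvert\leq N+1\). Your additional details are harmless refinements of the same argument: the reduction to finite subfamilies, and the observation that (1) fails jointly with (2) because \(N^{-1}\log(N+1)\to0\).
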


\begin{proof}
By \cref{lem:latitude}, distinct centers satisfy
\[
 R(x,y)<2q^2-1\leq0.
\]
Applying \cref{lem:obtuse} to the unit vectors \(x/\sqrt N\), \(x\in A\), gives \(\lvert A\rvert\leq N+1\).
\end{proof}

We next quantify what remains possible once the overlap threshold in
\cref{prop:smallq} becomes positive.  Let \(A_N^{\mathrm{sph}}(a)\)
denote the largest cardinality of a subset of the unit sphere
\(S^{N-1}(1)\) whose distinct elements have inner product at most
\(a\).  Write
\[
 \Hbin(x)=-x\log x-(1-x)\log(1-x),
 \qquad 0\leq x\leq1,
\]
with the convention \(0\log0=0\), and, for \(0\leq a<1\), define
\begin{equation}\label{eq:RLP}
 \mathcal R_{\mathrm{LP}}(a)
 =\frac{1+t}{2t}\,
 \Hbin\left(\frac{1-t}{1+t}\right),
 \qquad t=\sqrt{1-a^2}.
\end{equation}
The asymptotic linear-programming bound of Kabatiansky and Levenshtein
\cite[Theorem~4 and Equation~(54)]{KabatianskyLevenshtein1978}, after
converting their base-two logarithms to natural logarithms, states that
for every fixed \(a\in(0,1)\),
\begin{equation}\label{eq:KLcodeBound}
 \limsup_{N\to\infty}\frac1N\log A_N^{\mathrm{sph}}(a)
 \leq\mathcal R_{\mathrm{LP}}(a).
\end{equation}

\begin{lemma}[A comparison for the spherical-code rate]
\label{lem:RLPcomparison}
For \(0\leq a<1\),
\begin{equation}\label{eq:RLPcomparison}
 \mathcal R_{\mathrm{LP}}(a)
 \leq-\frac12\log(1-a),
\end{equation}
with equality if and only if \(a=0\).
\end{lemma}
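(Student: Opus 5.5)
The plan is a change of variables that turns both sides of \eqref{eq:RLPcomparison} into elementary functions of one parameter, followed by a one-variable inequality. Set \(t=\sqrt{1-a^2}\in(0,1]\) and \(x=(1-t)/(1+t)\in[0,1)\). Then \(t=(1-x)/(1+x)\) and \((1+t)/(2t)=1/(1-x)\), so
\[
 \mathcal R_{\mathrm{LP}}(a)=\frac{\Hbin(x)}{1-x}
 =-\frac{x\log x}{1-x}-\log(1-x).
\]
On the other side, \(a^2=1-t^2=4x/(1+x)^2\), hence \(a=2\sqrt x/(1+x)\) and \(1-a=(1-\sqrt x)^2/(1+x)\). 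Writing \(y=\sqrt x\in[0,1)\), the right-hand side becomes \(-\log(1-y)+\tfrac12\log(1+y^2)\). The correspondence \(a\mapsto y\) is a bijection of \([0,1)\) onto itself with \(a=0\iff y=0\).

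Subtracting, and using \(\log(1-y^2)=\log(1-y)+\log(1+y)\), the claim reduces to showing that
\[
 g(y):=\log(1+y)+\tfrac12\log(1+y^2)+\frac{2y^2\log y}{1-y^2}
\]
vanishes at \(y=0\) (with \(0\log0=0\)) and is strictly positive for \(0<y<1\). The value at \(y=0\) gives equality at \(a=0\).

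For positivity, I will use the elementary inequality \(\log z<\tfrac12(z-z^{-1})\) for \(z>1\). This holds because the difference vanishes at \(z=1\) and has derivative \(\tfrac12(1-z^{-1})^2>0\) for \(z>1\). Applying it with \(z=1/y\) gives \(-\log y<\tfrac12(1/y-y)\) for \(0<y<1\). Therefore
\[
 \frac{2y^2\log y}{1-y^2}>-\frac{y^2(1/y-y)}{1-y^2}=-y .
\]
Consequently \(g(y)>h(y):=\log(1+y)-y+\tfrac12\log(1+y^2)\) on \((0,1)\).

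Finally, \(h(0)=0\), and a short computation gives
\[
 h'(y)=\frac{1}{1+y}-1+\frac{y}{1+y^2}=\frac{y^2(1-y)}{(1+y)(1+y^2)}\geq0,
\]
with strict inequality on \((0,1)\). Hence \(h>0\) there, so \(g>0\) on \((0,1)\), which gives strict inequality for every \(a\in(0,1)\).

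The main obstacle is choosing a lower bound for the \(y^2\log y\) term that is sharp enough near \(y=0\). Cruder bounds such as \(\log y\geq1-1/y\) lose the first-order cancellation with \(\log(1+y)\). The bound \(\log(1/y)\leq\tfrac12(1/y-y)\) is exactly tight enough: after it, the remainder \(h\) starts at order \(y^3\) and is monotone.
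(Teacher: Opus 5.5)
Your proposal is correct and matches the paper's proof step for step. The paper uses the same substitution \(y=\sqrt{(1-t)/(1+t)}\) and the same key bound \(-2y\log y<1-y^2\); you derive that bound from \(\log z<\tfrac12(z-z^{-1})\) at \(z=1/y\), while the paper uses the monotonicity of \(1-y^2+2y\log y\), and it finishes with the same auxiliary function \(\log(1+y)+\tfrac12\log(1+y^2)-y\), whose derivative is \(y^2(1-y)/\bigl((1+y)(1+y^2)\bigr)\).
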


\begin{proof}
The assertion is immediate when \(a=0\).  For \(0<a<1\), put
\[
 x=\frac{1-t}{1+t},\qquad y=\sqrt{x}.
\]
Then \(0<y<1\) and
\[
 t=\frac{1-y^2}{1+y^2},\qquad
 a=\frac{2y}{1+y^2},\qquad
 \frac{1+t}{2t}=\frac1{1-y^2}.
\]
Consequently,
\begin{align*}
 \mathcal R_{\mathrm{LP}}(a)
 &=-\frac{2y^2}{1-y^2}\log y-\log(1-y)-\log(1+y),\\
 -\frac12\log(1-a)
 &=-\log(1-y)+\frac12\log(1+y^2).
\end{align*}
Now \(-2y\log y<1-y^2\) for \(0<y<1\), because
\(g(y)=1-y^2+2y\log y\) satisfies \(g(1)=0\) and
\(g'(y)=2(1-y+\log y)<0\), so \(g(y)>g(1)=0\).  Hence
\[
 -\frac{2y^2}{1-y^2}\log y<y.
\]
Moreover, if
\[
 k(y)=\log(1+y)+\frac12\log(1+y^2)-y,
\]
then \(k(0)=0\) and
\[
 k'(y)=\frac{y^2(1-y)}{(1+y)(1+y^2)}>0.
\]
Combining the last two inequalities proves
\eqref{eq:RLPcomparison}, strictly when \(a>0\).
\end{proof}

\begin{proposition}[Entropy loss for a union of disjoint bands]
\label{prop:bandEntropy}
Fix \(q\in(2^{-1/2},1)\), let \(\eta_N\downarrow0\), and let
\(A_N\subseteq\SN\) be any possibly random set such that the bands
\(B(x,q,\eta_N)\), \(x\in A_N\), are pairwise disjoint.  There is a
deterministic sequence \(e_N=e_N(q,(\eta_k)_k)\to0\), independent of
\(A_N\) and of the disorder, such that
\begin{equation}\label{eq:bandEntropy}
 \frac1N\log
 \mu_N\left(\bigcup_{x\in A_N}B(x,q,\eta_N)\right)
 \leq h(q)+e_N,
 \qquad h(q)<-\frac12\log2,
\end{equation}
where
\begin{equation}\label{eq:hq}
 h(q)=\mathcal R_{\mathrm{LP}}(2q^2-1)
       +\frac12\log(1-q^2).
\end{equation}
\end{proposition}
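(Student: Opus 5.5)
The proof combines a union bound with two inputs: a count of the centers, which are a spherical code, and the measure of a single band. By \cref{lem:latitude}, pairwise disjointness of the bands forces \(R(x,y)<2q^2-1=:a\) for distinct \(x,y\in A_N\). Here \(a\in(0,1)\) because \(q>2^{-1/2}\). Hence \(\{x/\sqrt N:x\in A_N\}\) is a spherical code with maximal inner product at most \(a\), and deterministically
\[
 \lvert A_N\rvert\leq A_N^{\mathrm{sph}}(a).
\]
This holds whatever \(A_N\) is and whatever the disorder. Subadditivity of \(\mu_N\) then gives
\[
 \mu_N\Bigl(\bigcup_{x\in A_N}B(x,q,\eta_N)\Bigr)\leq A_N^{\mathrm{sph}}(a)\,\sup_{x}\mu_N\bigl(B(x,q,\eta_N)\bigr).
\]
By rotation invariance, the band measure does not depend on \(x\) and equals \(\int_{q-\eta_N}^{q+\eta_N}\nu_N(t)\dd t\), using \eqref{eq:overlapdensity}.

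Next I estimate the band measure. The prefactor in \eqref{eq:overlapdensity} is \(O(\sqrt N)\). Once \(\eta_N<q\), the factor \((1-t^2)^{(N-3)/2}\) is decreasing in \(t\) on the band, so it is at most \((1-(q-\eta_N)^2)^{(N-3)/2}\) there. Therefore
\[
 \frac1N\log\mu_N(B)\leq\frac{N-3}{2N}\log\bigl(1-(q-\eta_N)^2\bigr)+O\Bigl(\frac{\log N}{N}\Bigr).
\]
Since \(\eta_N\to0\), this is \(\tfrac12\log(1-q^2)+o(1)\), with an error depending only on \(q\) and \((\eta_k)\). For the code count, \eqref{eq:KLcodeBound} gives \(\frac1N\log A_N^{\mathrm{sph}}(a)\leq\mathcal R_{\mathrm{LP}}(a)+o(1)\), where the \(o(1)\) is deterministic and depends only on \(a\), that is, only on \(q\). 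I take \(e_N\) to be the sum of these two vanishing errors, replaced by its positive part or by a running supremum so that it is a genuine deterministic sequence tending to \(0\). This proves \eqref{eq:bandEntropy} uniformly over all admissible \(A_N\), including random ones, because the bound is pathwise.

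It remains to show \(h(q)<-\frac12\log2\). Apply \cref{lem:RLPcomparison} with \(a=2q^2-1>0\); the inequality is strict because \(a\neq0\). Since \(1-a=2(1-q^2)\), this gives
\[
 \mathcal R_{\mathrm{LP}}(2q^2-1)<-\tfrac12\log2-\tfrac12\log(1-q^2).
\]
Adding \(\tfrac12\log(1-q^2)\) to both sides yields \(h(q)<-\tfrac12\log2\).

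The main obstacle is bookkeeping rather than any hard estimate. First, \eqref{eq:KLcodeBound} is a limsup statement, so I must extract from it a deterministic error sequence, which is straightforward because \(a\) is fixed. Second, I must ensure that the band-measure bound is uniform once \(\eta_N<q\) and handle the finitely many small \(N\) by enlarging \(e_N\). No probabilistic input is needed.
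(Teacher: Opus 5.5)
Your proposal is correct and follows essentially the same route as the paper: disjointness together with \cref{lem:latitude} makes the normalized centers a spherical code with inner products at most $2q^2-1$; the union bound multiplies the Kabatiansky--Levenshtein count by the common band measure; and \cref{lem:RLPcomparison} with $1-a=2(1-q^2)$ gives $h(q)<-\frac12\log2$. The differences are cosmetic. You bound the band measure directly by the maximum of $\nu_N$ on $[q-\eta_N,q+\eta_N]$ times the interval length. The paper instead bounds it by the cap measure $V_N(q-\eta_N)$ and uses the spherical-cap asymptotic with an auxiliary $\delta\downarrow0$.
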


\begin{proof}
Put \(q_N=q-\eta_N\).  For all large \(N\), \(q_N>0\), and
\(L(x,q_N)\subseteq B(x,q,\eta_N)\).  Applying
\cref{lem:latitude} at \(q_N\) therefore shows that disjointness
implies
\[
 R(x,y)<2q_N^2-1\qquad(x\ne y).
\]
Since \(q_N<q\), the normalized centers therefore form a spherical
code whose distinct inner products are at most \(2q^2-1\).  Also, by
\eqref{eq:overlapdensity}, every band has
the same measure and
\[
 \mu_N(B(x,q,\eta_N))
 \leq V_N(q_N):=\int_{q_N}^1\nu_N(t)\,\dd t.
\]
It follows that
\begin{equation}\label{eq:codeCapProduct}
 \mu_N\left(\bigcup_{x\in A_N}B(x,q,\eta_N)\right)
 \leq A_N^{\mathrm{sph}}(2q^2-1)V_N(q_N).
\end{equation}
For every fixed sufficiently small \(\delta>0\), the right-hand side
is at most
\[
 A_N^{\mathrm{sph}}(2q^2-1)V_N(q-\delta)
\]
for all large \(N\).  The code bound \eqref{eq:KLcodeBound} and the
standard spherical-cap asymptotic
\[
 \lim_{N\to\infty}\frac1N\log V_N(v)
 =\frac12\log(1-v^2),\qquad 0<v<1,
\]
therefore give, after \(\delta\downarrow0\), the first inequality in
\eqref{eq:bandEntropy}.  More explicitly, if \(b_N\) is the logarithm
of the deterministic right-hand side of \eqref{eq:codeCapProduct},
divided by \(N\) and minus \(h(q)\), then \(\limsup_N b_N\leq0\).
Thus
\[
 e_N=\max\{b_N,0\}+N^{-1}
\]
tends to zero and is uniform over all choices of \(A_N\).  Finally,
\cref{lem:RLPcomparison} applied to
\(a=2q^2-1>0\), together with
\(1-a=2(1-q^2)\), gives
\[
 h(q)<-\frac12\log\bigl(2(1-q^2)\bigr)
       +\frac12\log(1-q^2)
 =-\frac12\log2.
\]
\end{proof}

\begin{proposition}[Spherical-code--H\"older obstruction]
\label{prop:hotAllOverlap}
Fix \(p\geq3\), \(0<\beta\leq\beta_{\mathrm{sh}}(p)\), \(E\in\R\),
\(r\geq0\), and \(q\in(2^{-1/2},1)\).  Set
\begin{equation}\label{eq:cLPq}
 c_{\mathrm{LP}}(q)
 =-\mathcal R_{\mathrm{LP}}(2q^2-1)
  -\frac12\log(1-q^2).
\end{equation}
If
\begin{equation}\label{eq:qDependentCodeCriterion}
 \beta^2<2c_{\mathrm{LP}}(q),
\end{equation}
then the landscape is not \((E,q,r)\)-shattered.

Consequently, for every \(p\geq3\), \(E\in\R\), and \(r\geq0\),
the landscape is not \((E,q,r)\)-shattered for any fixed
\(q\in(0,1)\) whenever \(0<\beta\leq\sqrt{\log2}\).
\end{proposition}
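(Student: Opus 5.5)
The idea is to show that condition (4) of \cref{def:shattering} fails: the union of disjoint bands cannot carry the full free energy \(\beta^2/2\) from \cref{lem:fullFE}. Write \(U_N=\bigcup_{x\in A_N}B(x,q,\eta_N)\). By H\"older's inequality with exponents \(2,2\) applied to \(e^{-\beta H_N}\1_{U_N}\) under \(\mu_N\),
\[
 Z_{N,\beta}(U_N)\leq \mu_N(U_N)^{1/2}\,Z_{N,2\beta}^{1/2}.
\]
\cref{prop:bandEntropy} then gives, deterministically and uniformly over all admissible \(A_N\),
\[
 F_{N,\beta}(U_N)\leq \tfrac12 h(q)+\tfrac12 e_N+\tfrac12 F_N(2\beta).
\]

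It remains to bound \(F_N(2\beta)\) from above with high probability. Since \(2\beta\) may exceed \(\beta_{\mathrm{sh}}\), I will not use \cref{lem:fullFE} there. Instead I use the annealed bound: \(\E Z_{N,2\beta}=e^{N(2\beta)^2/2}=e^{2N\beta^2}\). Markov's inequality then gives \(F_N(2\beta)\leq 2\beta^2+\eps\) with probability tending to one, for any fixed \(\eps>0\). Combining the two bounds,
\[
 F_{N,\beta}(U_N)\leq \beta^2-\tfrac12 c_{\mathrm{LP}}(q)+\tfrac12\eps+o(1),
\]
using \(h(q)=-c_{\mathrm{LP}}(q)\). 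By \cref{lem:fullFE}, \(F_N(\beta)\to\beta^2/2\) in probability, so
\[
 F_N(\beta)-F_{N,\beta}(U_N)\geq \tfrac12\bigl(c_{\mathrm{LP}}(q)-\beta^2\bigr)-\tfrac12\eps-o(1).
\]
This is strictly positive once \(\eps\) is small, provided \(\beta^2<c_{\mathrm{LP}}(q)\).

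This is weaker than the stated criterion \eqref{eq:qDependentCodeCriterion}, which reads \(\beta^2<2c_{\mathrm{LP}}(q)\). To close the factor of two I will use a general Hölder exponent. For \(1/a+1/b=1\),
\[
 F_{N,\beta}(U_N)\leq \tfrac1a\log\mu_N(U_N)/N+\tfrac1b F_N(b\beta),
 \qquad F_N(b\beta)\lesssim b^2\beta^2/2.
\]
This gives the bound \(-c_{\mathrm{LP}}(1-1/b)+b\beta^2/2\). Comparing with \(\beta^2/2\), the gap \(c_{\mathrm{LP}}(1-1/b)-(b-1)\beta^2/2=(b-1)\bigl(c_{\mathrm{LP}}/b-\beta^2/2\bigr)\) is positive iff \(\beta^2<2c_{\mathrm{LP}}/b\). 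Letting \(b\downarrow1\) yields exactly \(\beta^2<2c_{\mathrm{LP}}(q)\). So I will fix \(b>1\) close to \(1\) with \(\beta^2<2c_{\mathrm{LP}}(q)/b\) and run the argument with that \(b\). Markov's bound on \(Z_{N,b\beta}\) holds for every \(b\), so there is no restriction on \(b\beta\). This is the step I expect needs the most care: choosing \(b\) and \(\eps\) consistently, and making sure the bound is uniform over random \(A_N\). Uniformity is guaranteed because both the entropy bound and the annealed bound do not depend on \(A_N\). The resulting positive gap contradicts (4) on an event of probability bounded away from zero, so the landscape is not shattered.

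For the consequence, \cref{prop:smallq} already handles \(q\leq2^{-1/2}\). For \(q>2^{-1/2}\), \cref{prop:bandEntropy} gives \(h(q)<-\tfrac12\log2\), i.e. \(2c_{\mathrm{LP}}(q)>\log2\geq\beta^2\), so the criterion holds. Since \(\beta\leq\sqrt{\log2}<\beta_{\mathrm{sh}}(p)\), \cref{lem:fullFE} also applies.
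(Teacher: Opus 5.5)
Your proposal is correct and follows essentially the paper's argument: H\"older's inequality with an exponent $\lambda>1$ chosen so that $\lambda\beta^2<2c_{\mathrm{LP}}(q)$, the entropy bound of \cref{prop:bandEntropy} on the event that the bands are disjoint, an annealed Markov bound for $F_N(\lambda\beta)$ (valid whether or not $\lambda\beta$ exceeds $\beta_{\mathrm{sh}}(p)$), and \cref{lem:fullFE} at $\beta$, which together give the same gap $(\lambda-1)\bigl(c_{\mathrm{LP}}(q)/\lambda-\beta^2/2\bigr)$. The preliminary exponent-$2$ step is unnecessary, and the one fact you state without proof, $\log2<\beta_{\mathrm{sh}}(p)^2$, is verified in the paper by writing $\beta_{\mathrm{sh}}(p)^2=\frac{n+1}{n+2}(1+\frac1n)^n\geq\frac43$ with $n=p-2$.
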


\begin{proof}
Suppose that a shattering family exists and write
\[
 U_N=\bigcup_{x\in A_N}B(x,q,\eta_N),
 \qquad m_N=\frac1N\log\mu_N(U_N).
\]
Let \(D_N\) be the event that these bands are pairwise disjoint; by
condition~\textup{(2)} in \cref{def:shattering},
\(\Pp(D_N)\to1\).
For every \(\lambda>1\), H\"older's inequality gives the pathwise bound
\begin{align}
 Z_{N,\beta}(U_N)
 &\leq \mu_N(U_N)^{1-1/\lambda}
       Z_{N,\lambda\beta}^{1/\lambda},\notag\\
 F_{N,\beta}(U_N)
 &\leq\left(1-\frac1\lambda\right)m_N
       +\frac1\lambda F_N(\lambda\beta).
 \label{eq:HolderRestricted}
\end{align}
Set \(c_q=c_{\mathrm{LP}}(q)=-h(q)\).  By
\eqref{eq:qDependentCodeCriterion}, we may choose \(\lambda>1\) such
that \(\lambda\beta^2<2c_q\).

Choose a fixed \(c\) satisfying \(\lambda\beta^2/2<c<c_q\).  By
\cref{prop:bandEntropy}, for all sufficiently large \(N\),
\(D_N\) implies \(m_N\leq-c\).
For every \(\theta>0\), Gaussian integration gives
\[
 \E Z_{N,\theta}=e^{N\theta^2/2}.
\]
Markov's inequality, together with \cref{lem:fullFE} at \(\beta\),
therefore gives deterministic sequences \(a_N,b_N\downarrow0\) and
events \(G_N\), with \(\Pp(G_N)\to1\), on which
\[
 F_N(\lambda\beta)\leq\frac{\lambda^2\beta^2}{2}+a_N,
 \qquad
 F_N(\beta)\geq\frac{\beta^2}{2}-b_N.
\]
For instance, one may take \(a_N=N^{-1/2}\), for which the Markov
failure probability is at most \(e^{-\sqrt N}\); the sequence \(b_N\)
follows by a diagonal choice from convergence in probability.
On \(D_N\cap G_N\), \eqref{eq:HolderRestricted} therefore gives
\[
 F_{N,\beta}(U_N)
 \leq-\left(1-\frac1\lambda\right)c
       +\frac{\lambda\beta^2}{2}+\frac{a_N}{\lambda}.
\]
The deterministic part of the right-hand side lies below
\(\beta^2/2\) by the positive amount
\[
 (\lambda-1)\left(\frac c\lambda-\frac{\beta^2}{2}\right).
\]
It follows that a fixed positive lower bound on
\(F_N(\beta)-F_{N,\beta}(U_N)\) holds with probability tending to
one, contradicting condition~\textup{(4)} in \cref{def:shattering}.

Finally, \cref{prop:bandEntropy} gives
\(c_{\mathrm{LP}}(q)>\frac12\log2\) for every
\(q>2^{-1/2}\).  Thus \(\beta^2\leq\log2\) implies the strict
criterion \eqref{eq:qDependentCodeCriterion} for every such fixed
\(q\), while \cref{prop:smallq} treats \(q\leq2^{-1/2}\).  This range
is contained in the assumed dynamical replica-symmetric range: with \(n=p-2\),
the binomial theorem gives
\[
 \beta_{\mathrm{sh}}(p)^2
 =\frac{n+1}{n+2}\left(1+\frac1n\right)^n
 \geq\frac43>\log2.
\]
\end{proof}

\section{A marked Kac--Rice upper bound}\label{sec:KR}

We next control the total partition-function mass of all bands centered
at critical points.  Throughout this section, \(p\in\{3,4,\ldots\}\)
is fixed and
\[
 \E H_N(x)H_N(y)=N R(x,y)^p.
\]
Put \(n=N-1\).  We use the normalization in which the independent
upper-triangular entries of the \(n\times n\) GOE matrix \(G_n\) satisfy
\begin{equation}\label{eq:GOEnormalization}
 \E(G_n)_{ij}^2=\frac1n\quad(i<j),
 \qquad
 \E(G_n)_{ii}^2=\frac2n.
\end{equation}
Thus its empirical spectral law converges to the semicircle law on
\([-2,2]\).  Define
\begin{equation}\label{eq:KNexact}
 \alpha_{N,p}=\sqrt{\frac{p(p-1)(N-1)}N},
 \qquad
 K_N(u)=\alpha_{N,p}G_{N-1}+puI_{N-1}.
\end{equation}

Let \(\mu_{\mathrm{sc}}\) be the semicircle law
\[
 \mu_{\mathrm{sc}}(\dd\lambda)
 =\frac{\sqrt{4-\lambda^2}}{2\pi}
  \1_{\{|\lambda|\leq2\}}\dd\lambda,
\]
and define its logarithmic potential
\begin{equation}\label{eq:Omega}
 \Om(x)=\int_{-2}^2\log\lvert x-\lambda\rvert\,
 \mu_{\mathrm{sc}}(\dd\lambda).
\end{equation}
The one-point annealed critical-point complexity is
\begin{equation}\label{eq:Sigma}
 \SigmaP(u)
 =\frac12+\frac12\log(p-1)-\frac{u^2}{2}
  +\Om\left(\sqrt{\frac{p}{p-1}}\,u\right).
\end{equation}
For \(t\in(-1,1)\), set
\begin{equation}\label{eq:vpt}
 v_p(t)=1-t^{2p}-p t^{2p-2}(1-t^2),
 \qquad
 a_p(t)=p(p-1)t^{p-2}(1-t^2),
\end{equation}
and, for \(\beta>0\), define
\begin{equation}\label{eq:PhiGeneral}
 \PhiP(u,t)
 =\SigmaP(u)+\beta t^p u
  +\frac12\log(1-t^2)
  +\frac{\beta^2}{2}v_p(t).
\end{equation}

For \(q\in(0,1)\) and \(\eta>0\), let
\begin{equation}\label{eq:markedmass}
 \MN(q,\eta)
 =\sum_{x:\,\nabla H_N(x)=0} Z_{N,\beta}(B(x,q,\eta)).
\end{equation}
The sum is finite almost surely by the standard nondegeneracy of the
pure spherical model; this is also part of the Kac--Rice setup in
\cite[Section~3]{AuffingerBenArousCerny2013}.

For Borel sets \(A\subset\R\) and \(I\subset(-1,1)\), write
\begin{align}
 \mathcal K_N(A,I)
 ={}&\E\sum_{x:\,\nabla H_N(x)=0}
 \1_{\{-H_N(x)/N\in A\}}\notag\\
 &\quad\times
 \int_{\{\sigma:R(x,\sigma)\in I\}}
 e^{-\beta H_N(\sigma)}\,\mu_N(\dd\sigma).
 \label{eq:calKdefinition}
\end{align}

\begin{lemma}[Finite-dimensional critical-point law]
\label{lem:finitecritical}
For every \(x\in\SN\) and \(u\in\R\), in an orthonormal tangent frame,
\begin{equation}\label{eq:conditionalHessian}
 \mathcal L\left(\nabla^2H_N(x)\mid
 H_N(x)=-Nu,\ \nabla H_N(x)=0\right)
 =\mathcal L(K_N(u)).
\end{equation}
Equivalently, the conditional mean is \(puI_{N-1}\), and the centered
conditional covariance is
\begin{equation}\label{eq:conditionalHessianCov}
 \operatorname{Cov}\bigl((\nabla^2H_N)_{ij},(\nabla^2H_N)_{k\ell}
 \mid H_N,\nabla H_N\bigr)
 =\frac{p(p-1)}N
 (\delta_{ik}\delta_{j\ell}+\delta_{i\ell}\delta_{jk}).
\end{equation}
\end{lemma}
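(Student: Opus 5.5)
The plan is to compute the conditional law of the Riemannian Hessian directly from the covariance \eqref{eq:covariance}, following the standard computation in \cite[Section~3]{AuffingerBenArousCerny2013}. By rotation invariance of the law of \(H_N\), we may take \(x=\sqrt N e_N\) and use the chart \(\sigma=(y,\sqrt{N-\|y\|^2})\), \(y\in\R^{N-1}\), near \(y=0\). The coordinate vectors \(e_1,\dots,e_{N-1}\) then form an orthonormal tangent frame at \(x\). Let \(h(y)=H_N(\sigma(y))\). At \(y=0\) the chart has vanishing first-order distortion, so the Riemannian gradient is \(\partial_i h(0)\). The Riemannian Hessian at a point with zero Riemannian gradient equals the chart Hessian \(\partial_{ij}h(0)\); more generally, \(\nabla^2H_N(x)=\partial_{ij}h(0)\) up to a term in the gradient that vanishes on the event \(\nabla H_N(x)=0\). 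It therefore suffices to compute the joint Gaussian law of \(\bigl(h(0),\partial_ih(0),\partial_{ij}h(0)\bigr)\) from the covariance function \(C(y,y')=N R(\sigma(y),\sigma(y'))^p\), and then condition.

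First, I will expand \(R(\sigma(y),\sigma(y'))=\bigl(y\cdot y'+\sqrt{(N-\|y\|^2)(N-\|y'\|^2)}\bigr)/N\) to second order in each variable. This gives the following covariances:
\begin{align*}
 \operatorname{Var}h(0)&=N,\\
 \operatorname{Cov}(\partial_ih,\partial_jh)&=p\,\delta_{ij},\\
 \operatorname{Cov}(h,\partial_ih)&=0,\\
 \operatorname{Cov}(\partial_ih,\partial_{jk}h)&=0,\\
 \operatorname{Cov}(h,\partial_{ij}h)&=-p\,\delta_{ij},\\
 \operatorname{Cov}(\partial_{ij}h,\partial_{k\ell}h)&=\frac{p(p-1)}N(\delta_{ik}\delta_{j\ell}+\delta_{i\ell}\delta_{jk})+\frac{p^2}{N}\delta_{ij}\delta_{k\ell}.
\end{align*}
The vanishing covariances come from the symmetry \(y\mapsto -y\), so the gradient is independent of the pair (value, Hessian). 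Conditioning on \(\nabla H_N=0\) therefore changes nothing.

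Second, I will condition on \(h(0)=-Nu\) by Gaussian regression. The conditional mean is
\[
 \frac{\operatorname{Cov}(\partial_{ij}h,h)}{\operatorname{Var}h}\,(-Nu)=pu\,\delta_{ij}.
\]
The conditional covariance subtracts \(\frac{(-p\delta_{ij})(-p\delta_{k\ell})}{N}=\frac{p^2}{N}\delta_{ij}\delta_{k\ell}\), which exactly cancels the rank-one diagonal term. What remains is \(\frac{p(p-1)}{N}(\delta_{ik}\delta_{j\ell}+\delta_{i\ell}\delta_{jk})\), which is \eqref{eq:conditionalHessianCov}.

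Third, I will check this against the GOE normalization \eqref{eq:GOEnormalization}. With \(n=N-1\), the matrix \(\alpha_{N,p}G_{N-1}\) has off-diagonal variance \(\frac{p(p-1)(N-1)}{N}\cdot\frac1{N-1}=\frac{p(p-1)}N\) and diagonal variance \(\frac{2p(p-1)}N\), and its entries are independent up to symmetry. This matches \eqref{eq:conditionalHessianCov}. A centered Gaussian law is determined by its covariance, so the conditional Hessian has the law of \(K_N(u)\), which gives \eqref{eq:conditionalHessian}.

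The main obstacle is the bookkeeping in the second-order expansion of \(R^p\). In particular, I must get the \(-p\delta_{ij}\) value–Hessian covariance and the \(p^2\delta_{ij}\delta_{k\ell}/N\) term right, including the curvature contribution \(-\|y\|^2/(2N)\) from the square root. I also need to justify that the Riemannian Hessian coincides with the chart Hessian on the event \(\nabla H_N(x)=0\); the alternative is to write it as the Euclidean Hessian minus \(\frac{x\cdot\nabla_{\mathrm{Euc}}H}{N}I\), restricted to the tangent space, which yields the same covariance after a short computation.
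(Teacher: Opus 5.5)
Your proposal is correct and is essentially the paper's argument in chart form: in your coordinates the Christoffel symbols vanish at \(y=0\), so your hedge about a gradient term is unnecessary, and Euler's identity gives exactly \(\partial_{ij}h(0)=B_{ij}-\frac pN H_N(x)\delta_{ij}\) with \(B\) the ambient Hessian, which is the decomposition used in the appendix. The paper uses that \(B\) is already independent of \((H_N(x),\nabla H_N(x))\), so conditioning reduces to substituting \(H_N(x)=-Nu\); you instead compute the unconditional Hessian covariance and let the regression cancel the \(p^2\delta_{ij}\delta_{k\ell}/N\) term, and your covariance values and the GOE matching are all correct.
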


\begin{proof}
This follows by differentiating the covariance kernel.  A direct
derivation, including the spherical curvature term, is given in
Appendix~\ref{app:regression}; compare
\cite[Lemma~3.2]{AuffingerBenArousCerny2013}.
\end{proof}

Define the expected critical-value density \(\rho_N\) by
\begin{equation}\label{eq:rhoDefinition}
 \E\sum_{x:\,\nabla H_N(x)=0}
 f\left(-\frac{H_N(x)}N\right)
 =\int_{\R}f(u)\rho_N(u)\,\dd u
\end{equation}
for every nonnegative Borel function \(f\).

\begin{lemma}[Exact one-point Kac--Rice density]\label{lem:exactrho}
Let
\begin{equation}\label{eq:kappa}
 \begin{aligned}
  \omega_N&=\operatorname{Vol}(S^{N-1}(\sqrt N))
  =\frac{2\pi^{N/2}N^{(N-1)/2}}{\Gamma(N/2)},\\
  \kappa_{N,p}&=\omega_N(2\pi p)^{-(N-1)/2}
  \sqrt{\frac N{2\pi}}.
 \end{aligned}
\end{equation}
Then
\begin{equation}\label{eq:rhoExact}
 \rho_N(u)=\kappa_{N,p}e^{-Nu^2/2}
 \E\lvert\det K_N(u)\rvert,
\end{equation}
and
\begin{equation}\label{eq:kappaLimit}
 \frac1N\log\kappa_{N,p}\longrightarrow
 \frac12-\frac12\log p.
\end{equation}
\end{lemma}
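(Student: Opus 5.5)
We plan to apply the Kac--Rice formula on the sphere \(\SN\) to the smooth Gaussian field \(H_N\), with the test function \(f(-H_N(x)/N)\), and then reduce everything to a one-point computation by rotational invariance. The general Kac--Rice identity gives
\[
 \E\sum_{x:\,\nabla H_N(x)=0} f\!\left(-\frac{H_N(x)}N\right)
 =\int_{\SN}\E\Bigl[\,\lvert\det\nabla^2H_N(x)\rvert\, f\!\left(-\tfrac{H_N(x)}N\right)\Bigm|\nabla H_N(x)=0\Bigr]\,
 \varphi_{\nabla H_N(x)}(0)\,\dd x .
\]
Isotropy makes the integrand independent of \(x\). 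The spatial integral therefore produces the factor \(\omega_N\).

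The next step is to compute the one-point Gaussian structure at a fixed \(x\) by differentiating the covariance \(NR(x,y)^p\). We need three facts:
\begin{enumerate}
\item \(H_N(x)\) has variance \(N\);
\item the tangential gradient is independent of \(H_N(x)\) and of the Hessian, with covariance \(pI_{N-1}\), so that \(\varphi_{\nabla H_N(x)}(0)=(2\pi p)^{-(N-1)/2}\);
\item \(H_N(x)\) has the Gaussian density \((2\pi N)^{-1/2}e^{-h^2/(2N)}\).
\end{enumerate}
We then disintegrate the conditional expectation over \(h=H_N(x)\). The change of variables \(h=-Nu\) contributes \(\dd h=N\,\dd u\), and the density becomes
\[
 N(2\pi N)^{-1/2}e^{-Nu^2/2}=\sqrt{\tfrac N{2\pi}}\,e^{-Nu^2/2}.
\]
Given \(H_N(x)=-Nu\) and \(\nabla H_N(x)=0\), the Hessian has the law of \(K_N(u)\) by \cref{lem:finitecritical}, which supplies \(\E\lvert\det K_N(u)\rvert\). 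Collecting the factors gives
\[
 \rho_N(u)=\omega_N(2\pi p)^{-(N-1)/2}\sqrt{\tfrac N{2\pi}}\,e^{-Nu^2/2}\,\E\lvert\det K_N(u)\rvert,
\]
which is \eqref{eq:rhoExact} with \(\kappa_{N,p}\) as in \eqref{eq:kappa}.

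The main care point is justifying the Kac--Rice formula itself. It requires that the joint law of \((\nabla H_N, \nabla^2 H_N)\) is nondegenerate and that the field is almost surely Morse. This is standard for the pure spherical model and can be cited from \cite{AuffingerBenArousCerny2013}, whose normalization we simply track. Extending from nice \(f\) to all nonnegative Borel \(f\) follows by monotone class.

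Finally, for \eqref{eq:kappaLimit} we use Stirling's formula, \(\log\Gamma(N/2)=\tfrac N2\log\tfrac N2-\tfrac N2+O(\log N)\). This gives
\[
 \tfrac1N\log\omega_N\to\tfrac12\log(2\pi)+\tfrac12 .
\]
The factor \((2\pi p)^{-(N-1)/2}\) contributes \(-\tfrac12\log(2\pi p)\), and the prefactor \(\sqrt{N/(2\pi)}\) is subexponential. Summing these gives the limit \(\tfrac12-\tfrac12\log p\).
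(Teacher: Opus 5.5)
Your proposal is correct and follows essentially the same route as the paper: a one-point Kac--Rice formula with isotropy producing \(\omega_N\), independence of the value from a gradient of covariance \(pI_{N-1}\), the change of variables \(H_N(x)=-Nu\) giving \(\sqrt{N/(2\pi)}\,e^{-Nu^2/2}\), \cref{lem:finitecritical} for the conditional Hessian, and Stirling for \eqref{eq:kappaLimit}. You have simply written out the constant bookkeeping that the paper's short proof leaves implicit, and your arithmetic checks.
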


\begin{proof}
The gradient has covariance \(pI_{N-1}\) and is independent of the
field value.  The one-point Kac--Rice formula, the change of variables
\(H_N(x)=-Nu\), and \cref{lem:finitecritical} therefore give
\eqref{eq:rhoExact}; see also
\cite[Lemma~3.1 and Section~3]{AuffingerBenArousCerny2013}.  Stirling's
formula gives \eqref{eq:kappaLimit}.
\end{proof}

Let \(K_N(u)^{(1)}\) be the principal minor obtained by deleting the
first row and column, and define the finite-\(N\) determinant rates
\begin{align}
 \mathcal L_{N,p}(u)
 &=\frac1N\log\E|\det K_N(u)|,
 \label{eq:finiteFullDetRate}\\
 \mathcal L_{N,p}^{(1)}(u)
 &=\frac1N\log\E|\det K_N(u)^{(1)}|.
 \label{eq:finiteMinorDetRate}
\end{align}

\begin{lemma}[Uniform GOE determinant asymptotics]
\label{lem:uniformGOEdet}
For every \(L>0\),
\begin{align}
 \sup_{|u|\leq L}\left|
 \mathcal L_{N,p}(u)
 -\left[\frac12\log(p(p-1))
 +\Om\left(\sqrt{\frac p{p-1}}\,u\right)\right]
 \right|&\longrightarrow0,\label{eq:fullDetUniform}\\
 \sup_{|u|\leq L}\left|
 \mathcal L_{N,p}^{(1)}(u)
 -\left[\frac12\log(p(p-1))
 +\Om\left(\sqrt{\frac p{p-1}}\,u\right)\right]
 \right|&\longrightarrow0.\label{eq:minorDetUniform}
\end{align}
In particular, the limiting determinant rate is well defined by
\begin{equation}\label{eq:detRate}
 \mathcal L_p(u)
 :=\lim_{N\to\infty}\mathcal L_{N,p}(u)
 =\lim_{N\to\infty}\mathcal L_{N,p}^{(1)}(u)
 =\frac12\log(p(p-1))
 +\Om\left(\sqrt{\frac p{p-1}}\,u\right).
\end{equation}
Consequently,
\begin{equation}\label{eq:rhoSigma}
 \sup_{|u|\leq L}\left|
 \frac1N\log\rho_N(u)-\SigmaP(u)
 \right|\longrightarrow0.
\end{equation}
\end{lemma}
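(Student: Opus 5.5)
We reduce the lemma to the expected absolute-determinant asymptotics for shifted GOE matrices in \cite[Corollary~1.3]{BenArousBourgadeMcKenna2022}, after rescaling.

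\emph{Full matrix.} Write
\[
 K_N(u)=\alpha_{N,p}\bigl(G_{N-1}+\tfrac{pu}{\alpha_{N,p}}I_{N-1}\bigr).
\]
Then
\[
 \mathcal L_{N,p}(u)=\tfrac{N-1}{N}\log\alpha_{N,p}+\tfrac1N\log\E\bigl|\det\bigl(G_{N-1}-E_N(u)\bigr)\bigr|,
 \qquad E_N(u)=-\frac{pu}{\alpha_{N,p}}.
\]
Two elementary facts are needed.
\begin{itemize}
 \item \(\alpha_{N,p}\to\sqrt{p(p-1)}\), so the first term converges to \(\tfrac12\log(p(p-1))\) with no dependence on \(u\).
 \item \(E_N(u)\to-\sqrt{p/(p-1)}\,u\) uniformly on \(|u|\le L\), and the shifts stay in a fixed compact set.
\end{itemize}
The cited corollary gives
\[
 \tfrac1n\log\E|\det(G_n-E)|\to\Om(E)
\]
uniformly for \(E\) in compacts, since its error term \(e^{o(n)}\) is locally uniform in \(E\). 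Since \(\Om\) is even and continuous (indeed uniformly continuous on compacts), we can replace \(E_N(u)\) by its limit. The factor \((N-1)/N\) versus \(1/n\) is harmless because \(\Om\) is bounded on compacts. This proves \eqref{eq:fullDetUniform}.

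\emph{Principal minor.} \(K_N(u)^{(1)}\) has the same form: it equals \(\alpha_{N,p}G'+puI_{N-2}\), where \(G'\) is an \((N-2)\)-dimensional GOE with variance \(1/(N-1)\) instead of \(1/(N-2)\). We therefore rescale:
\[
 G'=\sqrt{\tfrac{N-2}{N-1}}\,\tilde G_{N-2},
\]
with \(\tilde G_{N-2}\) in the normalization \eqref{eq:GOEnormalization}. The effective scale factor \(\alpha_{N,p}\sqrt{(N-2)/(N-1)}\) still tends to \(\sqrt{p(p-1)}\), and the shift still converges uniformly. Applying the same corollary with \(n=N-2\) gives \eqref{eq:minorDetUniform}. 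Together these two limits define \(\mathcal L_p\) and prove \eqref{eq:detRate}.

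\emph{Consequence \eqref{eq:rhoSigma}.} By \cref{lem:exactrho},
\[
 \frac1N\log\rho_N(u)=\frac1N\log\kappa_{N,p}-\frac{u^2}{2}+\mathcal L_{N,p}(u).
\]
By \eqref{eq:kappaLimit} and \eqref{eq:fullDetUniform}, this converges uniformly on \(|u|\le L\) to
\[
 \tfrac12-\tfrac12\log p-\tfrac{u^2}2+\tfrac12\log(p(p-1))+\Om\Bigl(\sqrt{\tfrac p{p-1}}u\Bigr)=\SigmaP(u),
\]
since \(\tfrac12\log(p(p-1))-\tfrac12\log p=\tfrac12\log(p-1)\). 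The sign convention in the shift is irrelevant because the semicircle law, and hence \(\Om\), is symmetric.

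\emph{Main obstacle.} The delicate step is uniformity in \(u\). In particular, the shift \(E\) may sit inside the bulk \([-2,2]\), where \(\Om\) is only continuous and not smooth, or at the spectral edge. What we need is precisely that the cited result holds uniformly on compact sets of \(E\), including the edge.

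If only pointwise convergence is available, we would get uniformity as follows.
\begin{itemize}
 \item \(E\mapsto\tfrac1n\log\E|\det(G_n-E)|\) is a log of an expectation of \(\prod_i|\lambda_i-E|\). We would compare nearby \(E,E'\) via \(|\lambda-E'|\le|\lambda-E|+|E-E'|\), which gives a one-sided equicontinuity bound.
 \item The other direction would use the lower bound from pointwise convergence on a finite grid.
 \item Combined with continuity of \(\Om\), this upgrades pointwise convergence to uniform convergence by a standard Dini-type argument.
\end{itemize}
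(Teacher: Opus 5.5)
Your proposal is correct and follows the paper's proof essentially step for step. You rescale $K_N(u)=\alpha_{N,p}(G_{N-1}-z_N(u)I_{N-1})$, invoke the locally uniform, edge-inclusive form of \cite[Corollary~1.3]{BenArousBourgadeMcKenna2022}, treat the minor via $G_{N-1}^{(1)}\stackrel{d}{=}\sqrt{(N-2)/(N-1)}\,G_{N-2}$, and then combine with \eqref{eq:rhoExact} and \eqref{eq:kappaLimit}. Your Dini-type fallback is not needed, and as sketched it would not close on its own: $|\lambda-E'|\le|\lambda-E|+\delta$ only bounds $\E\prod_i|\lambda_i-E'|$ by $\E\prod_i(|\lambda_i-E|+\delta)$, not by $\E\prod_i|\lambda_i-E|$, so comparing exponential rates would require additional input.
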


\begin{proof}
Writing
\[
 K_N(u)=\alpha_{N,p}
 \left(G_{N-1}-z_N(u)I_{N-1}\right),
 \qquad z_N(u)=-\frac{pu}{\alpha_{N,p}},
\]
we have \(z_N(u)\to-\sqrt{p/(p-1)}\,u\) locally uniformly.  The
expected absolute-determinant asymptotics for Wigner matrices
\cite[Corollary~1.3]{BenArousBourgadeMcKenna2022} hold locally uniformly
in the spectral parameter, including at the spectral edges.  They give
\eqref{eq:fullDetUniform}.  Moreover,
\begin{equation}\label{eq:minorLaw}
 G_{N-1}^{(1)}\stackrel d=
 \sqrt{\frac{N-2}{N-1}}\,G_{N-2},
\end{equation}
so the same result gives \eqref{eq:minorDetUniform}.  Finally,
\eqref{eq:rhoExact}, \eqref{eq:kappaLimit}, and \eqref{eq:detRate}
give \eqref{eq:rhoSigma}, which also recovers the standard complexity
formula \eqref{eq:Sigma} of \cite{AuffingerBenArousCerny2013}.
\end{proof}

Fix \(x\in\SN\), and choose \(\sigma\in\SN\) with
\(R(x,\sigma)=t\).  We use the shorthand
\[
 \mathcal E_{x,u}=\{H_N(x)=-Nu,\ \nabla H_N(x)=0\}.
\]
Gaussian regression gives
\begin{align}
 \E[H_N(\sigma)\mid\mathcal E_{x,u}]
 &=-Nt^pu,\label{eq:condmean}\\
 \operatorname{Var}(H_N(\sigma)\mid\mathcal E_{x,u})
 &=Nv_p(t).\label{eq:condvar}
\end{align}
In geodesic normal coordinates whose first tangent direction points
toward \(\sigma\), one also has the rank-at-most-one covariance
\begin{equation}\label{eq:rankonecov}
 \operatorname{Cov}(\nabla^2H_N(x),H_N(\sigma)\mid\mathcal E_{x,u})
 =a_p(t)e_1e_1^{\mathsf T}.
\end{equation}
These identities are verified in Appendix~\ref{app:regression}.

\begin{lemma}[Exact marked Kac--Rice identity]\label{lem:exactmarkedKR}
For Borel sets \(A\subset\R\) and \(I\subset(-1,1)\),
\begin{align}
 \mathcal K_N(A,I)
 ={}&\kappa_{N,p}\int_A e^{-Nu^2/2}\int_I\nu_N(t)
 \exp\left\{N\beta t^pu+\frac{N\beta^2}{2}v_p(t)\right\}\notag\\
 &\quad\times
 \E\left|\det\left(K_N(u)-\beta a_p(t)e_1e_1^{\mathsf T}\right)\right|
 \,\dd t\,\dd u.
 \label{eq:exactMarkedKR}
\end{align}
Equivalently, with
\begin{equation}\label{eq:DNdefinition}
 D_N(u,t)=
 \frac{\E|\det(K_N(u)-\beta a_p(t)e_1e_1^{\mathsf T})|}
 {\E|\det K_N(u)|},
\end{equation}
one has
\begin{equation}\label{eq:exactMarkedRatio}
 \mathcal K_N(A,I)=\int_A\rho_N(u)\int_I\nu_N(t)
 e^{N\beta t^pu+N\beta^2v_p(t)/2}D_N(u,t)\,\dd t\,\dd u.
\end{equation}
\end{lemma}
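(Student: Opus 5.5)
We prove \eqref{eq:exactMarkedKR} by applying the one-point Kac--Rice formula to a weighted count, disintegrating over the overlap, and then performing a Gaussian change of measure (exponential tilting) on the Hessian. \eqref{eq:exactMarkedRatio} then follows by dividing by \eqref{eq:rhoExact}.

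\textbf{Kac--Rice with a mark.} By Fubini, \(\mathcal K_N(A,I)\) is the expectation of a sum over critical points \(x\) of the mark
\[
\Psi(x)=\1_{\{-H_N(x)/N\in A\}}\int_{\{R(x,\sigma)\in I\}}e^{-\beta H_N(\sigma)}\mu_N(\dd\sigma),
\]
a nonnegative functional of the whole field. The generalized one-point Kac--Rice formula for marked critical points (as in \cite[Section~3]{AuffingerBenArousCerny2013}, justified by nondegeneracy and monotone approximation of the mark) gives
\[
\int_{\SN}\varphi_{\nabla H_N(x)}(0)\,\E\bigl[|\det\nabla^2H_N(x)|\,\Psi(x)\bigm|\nabla H_N(x)=0\bigr]\dd x .
\]
The gradient has covariance \(pI_{N-1}\), is independent of \(H_N(x)\), and \(H_N(x)\sim\mathcal N(0,N)\). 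Writing \(H_N(x)=-Nu\), the prefactors combine exactly into \(\kappa_{N,p}e^{-Nu^2/2}\), as in \cref{lem:exactrho}. By rotation invariance the inner quantity does not depend on \(x\). Disintegrating \(\mu_N\) in the overlap \(t=R(x,\sigma)\) with density \(\nu_N\), and using rotation invariance around \(x\) to fix a representative \(\sigma\) on \(L(x,t)\), Tonelli reduces everything to computing
\[
\E\bigl[|\det\nabla^2H_N(x)|\,e^{-\beta H_N(\sigma)}\bigm|\mathcal E_{x,u}\bigr].
\]

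\textbf{Tilting.} Conditionally on \(\mathcal E_{x,u}\), the pair \((\nabla^2H_N(x),H_N(\sigma))\) is jointly Gaussian. The Hessian has the law of \(K_N(u)\) by \cref{lem:finitecritical}, \(H_N(\sigma)\) has mean \(-Nt^pu\) and variance \(Nv_p(t)\) by \eqref{eq:condmean}--\eqref{eq:condvar}, and their cross-covariance is \(a_p(t)e_1e_1^{\mathsf T}\) by \eqref{eq:rankonecov}. For a Gaussian vector \((Y,W)\) and any nonnegative \(g\), the Cameron--Martin identity gives
\[
\E[g(Y)e^{-\beta W}]=e^{-\beta\E W+\beta^2\operatorname{Var}W/2}\,\E\,g\bigl(Y-\beta\operatorname{Cov}(Y,W)\bigr).
\]
Applying this with \(g=|\det\cdot|\) produces the factor \(\exp\{N\beta t^pu+N\beta^2v_p(t)/2\}\) and replaces \(K_N(u)\) by \(K_N(u)-\beta a_p(t)e_1e_1^{\mathsf T}\). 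This yields \eqref{eq:exactMarkedKR}.

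\textbf{Main obstacle.} The delicate step is justifying the marked Kac--Rice formula for a mark that depends on the whole field and is unbounded. The approach is to truncate \(\Psi\wedge M\), restrict to compact sets of \(u\), apply the standard formula for continuous bounded marks (the field is smooth and a.s.\ Morse), and then pass to the limit by monotone convergence. A secondary point is the frame choice: the normal coordinates must have \(e_1\) pointing toward \(\sigma\). This is harmless because \(|\det|\) is invariant under the orthogonal conjugation of \(K_N(u)\), whose law is conjugation invariant.
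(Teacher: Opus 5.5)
Your proposal is correct and follows essentially the same route as the paper. Both use the marked one-point Kac--Rice formula with isotropy and the overlap density $\nu_N$, combined with conditional Gaussian tilting of $\nabla^2H_N(x)$ against $H_N(\sigma)$ via the regression identities \eqref{eq:condmean}--\eqref{eq:rankonecov}; your truncation/monotone-convergence justification of the marked formula and your frame-invariance remark simply make explicit details the paper leaves implicit.
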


\begin{proof}
If \((X,Y)\) is jointly Gaussian and \(F\geq0\), Gaussian exponential
tilting gives
\[
 \E[F(X)e^{-\beta Y}]
 =e^{-\beta\E Y+\beta^2\operatorname{Var}(Y)/2}
 \E[F(X-\beta\operatorname{Cov}(X,Y))].
\]
Apply this identity conditionally under \(\mathcal E_{x,u}\) with
\(X=\nabla^2H_N(x)\), \(Y=H_N(\sigma)\), and
\(F(X)=|\det X|\).  Equations \eqref{eq:condmean}--\eqref{eq:rankonecov}
give
\begin{align}
 &\E[|\det\nabla^2H_N(x)|e^{-\beta H_N(\sigma)}
 \mid\mathcal E_{x,u}]\notag\\
 &\quad=e^{N\beta t^pu+N\beta^2v_p(t)/2}
 \E|\det(K_N(u)-\beta a_p(t)e_1e_1^{\mathsf T})|.
 \label{eq:tiltedDet}
\end{align}
The marked one-point Kac--Rice formula, isotropy, and
\eqref{eq:overlapdensity} now yield \eqref{eq:exactMarkedKR}.
Substituting \eqref{eq:rhoExact} gives \eqref{eq:exactMarkedRatio}.
\end{proof}

\begin{corollary}[Uniform rank-one stability]\label{cor:rankoneStability}
For every \(L>0\) and compact \(J\Subset(-1,1)\),
\begin{equation}\label{eq:detstability}
 \limsup_{N\to\infty}
 \sup_{\substack{|u|\leq L\\t\in J}}
 \frac1N\log D_N(u,t)\leq0.
\end{equation}
\end{corollary}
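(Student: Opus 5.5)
The bound will come from a cofactor expansion along the first row and column, which turns the rank-one perturbation into a difference of two determinants, followed by the uniform asymptotics of \cref{lem:uniformGOEdet}.

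\textbf{Step 1: cofactor expansion.} The determinant is linear in the \((1,1)\) entry. Hence, for any square matrix \(K\) and scalar \(c\),
\[
 \det\bigl(K-c\,e_1e_1^{\mathsf T}\bigr)=\det K-c\det K^{(1)},
\]
where \(K^{(1)}\) deletes the first row and column. Taking \(K=K_N(u)\) and \(c=\beta a_p(t)\), the triangle inequality gives
\[
 \E\bigl|\det(K_N(u)-\beta a_p(t)e_1e_1^{\mathsf T})\bigr|
 \leq\E|\det K_N(u)|+\beta|a_p(t)|\,\E|\det K_N(u)^{(1)}|.
\]
Dividing by \(\E|\det K_N(u)|\), which is positive because \(K_N(u)\) is almost surely nonsingular, we obtain
\[
 D_N(u,t)\leq1+\beta C_J\exp\Bigl\{N\bigl(\mathcal L^{(1)}_{N,p}(u)-\mathcal L_{N,p}(u)\bigr)\Bigr\},
 \qquad C_J=\sup_{t\in J}|a_p(t)|<\infty .
\]

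\textbf{Step 2: uniform asymptotics.} By \eqref{eq:fullDetUniform} and \eqref{eq:minorDetUniform}, the two finite-\(N\) rates converge uniformly on \(|u|\leq L\) to the same limit \(\mathcal L_p(u)\). Therefore
\[
 \eps_N:=\sup_{|u|\leq L}\bigl|\mathcal L^{(1)}_{N,p}(u)-\mathcal L_{N,p}(u)\bigr|\to0 .
\]
Substituting into Step 1 gives
\[
 \sup_{|u|\leq L,\ t\in J}D_N(u,t)\leq1+\beta C_Je^{N\eps_N}.
\]
Taking \(\frac1N\log\), the right-hand side is at most \(\frac1N\log(1+\beta C_J)+\eps_N\to0\). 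This proves \eqref{eq:detstability}.

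\textbf{Main obstacle.} The delicate input is the uniformity in \(u\) of the minor asymptotics, and in particular the matching of the exponential rates of the full matrix and its \((N-2)\)-dimensional minor, including when \(pu/\alpha_{N,p}\) lies near the spectral edge. This is exactly what \cref{lem:uniformGOEdet} supplies through the rescaling \eqref{eq:minorLaw}.

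Compactness of \(J\) is only used to bound \(a_p\). In fact \(a_p\) is bounded on all of \((-1,1)\), so this assumption is not essential.
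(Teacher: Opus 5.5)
Your proposal is correct and is essentially the paper's proof: the paper uses the same cofactor bound \(|\det(K-be_1e_1^{\mathsf T})|\leq|\det K|+|b|\,|\det K^{(1)}|\), bounds \(\beta a_p\) on \(J\), and invokes the matching uniform rates \eqref{eq:fullDetUniform} and \eqref{eq:minorDetUniform}. You additionally spell out the closing estimate \(\frac1N\log(1+\beta C_Je^{N\eps_N})\leq\frac1N\log(1+\beta C_J)+\eps_N\), which the paper leaves implicit.
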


\begin{proof}
Cofactor expansion gives, for every scalar \(b\),
\[
 |\det(K-be_1e_1^{\mathsf T})|
 \leq|\det K|+|b|\,|\det K^{(1)}|.
\]
Since \(\sup_{t\in J}|\beta a_p(t)|<\infty\), the assertion follows
from \eqref{eq:fullDetUniform} and \eqref{eq:minorDetUniform}.
\end{proof}

\begin{lemma}[Uniform energy-tail bound]\label{lem:energyTail}
For every compact \(J\Subset(-1,1)\), there is \(C<\infty\) such that,
for all \(N\geq3\), \(u\in\R\), and \(t\in J\),
\begin{equation}\label{eq:globalDetBound}
 \E|\det(K_N(u)-\beta a_p(t)e_1e_1^{\mathsf T})|
 \leq[C(1+|u|)]^{N-1}.
\end{equation}
Moreover,
\begin{equation}\label{eq:energyTail}
 \lim_{L\to\infty}\limsup_{N\to\infty}\frac1N
 \log\mathcal K_N(\{u:|u|>L\},J)=-\infty.
\end{equation}
\end{lemma}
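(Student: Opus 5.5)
For \eqref{eq:globalDetBound}, I will use Hadamard's inequality together with the operator norm. Write \(K=K_N(u)-\beta a_p(t)e_1e_1^{\mathsf T}=\alpha_{N,p}G_{N-1}+puI_{N-1}-b\,e_1e_1^{\mathsf T}\) with \(b=\beta a_p(t)\). Since \(J\) is compact, \(|b|\leq b_J<\infty\) uniformly in \(t\in J\). Then
\[
|\det K|\leq\|K\|^{N-1}\leq\bigl(\alpha_{N,p}\|G_{N-1}\|+p|u|+b_J\bigr)^{N-1}.
\]
I will bound \(\E(a+c\|G\|)^{N-1}\) using a standard GOE norm tail, namely \(\Pp(\|G_n\|>2+s)\leq e^{-ns^2/C}\) (Gaussian concentration of the Lipschitz function \(\|G\|\), whose Lipschitz constant in the entries is \(O(n^{-1/2})\)). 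Split on \(\{\|G\|\leq 3\}\) and its complement. On the first event the integrand is at most \((a+3c)^{N-1}\). On the complement, integrating the Gaussian tail against \((a+c(2+s))^{N-1}\) gives at most \(C'^{\,N}(1+a)^{N-1}\), because \(\sup_s (1+s)^{N}e^{-Ns^2/C}\leq C''^N\). Since \(\alpha_{N,p}\leq\sqrt{p(p-1)}\) and \(a=p|u|+b_J\), this yields \([C(1+|u|)]^{N-1}\) after enlarging \(C\), uniformly in \(N\geq3\), \(u\), and \(t\in J\).

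For \eqref{eq:energyTail}, I plug \eqref{eq:globalDetBound} into the exact marked identity \eqref{eq:exactMarkedKR}. The \(t\)-dependent factors are bounded: \(\nu_N(t)\leq C\sqrt N\), \(t^p|u|\leq|u|\), and \(0\leq v_p(t)\leq1\), the last because it is a conditional variance divided by \(N\). This gives
\[
\mathcal K_N(\{|u|>L\},J)\leq\kappa_{N,p}\,C\sqrt N\,e^{N\beta^2/2}\int_{|u|>L}\exp\Bigl\{N\Bigl(-\tfrac{u^2}{2}+\beta|u|+\log(C(1+|u|))\Bigr)\Bigr\}\dd u.
\]
For \(L\) large, the exponent satisfies \(f(u):=-u^2/2+\beta|u|+\log(C(1+|u|))\leq -u^2/4\) on \(|u|>L\). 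The integral is therefore at most \(\int_{|u|>L}e^{-Nu^2/4}\dd u\leq e^{-NL^2/4}\) for \(N\geq1\) and \(L\geq 2\). Using \eqref{eq:kappaLimit}, we get
\[
\limsup_N\frac1N\log\mathcal K_N(\{|u|>L\},J)\leq \tfrac12-\tfrac12\log p+\tfrac{\beta^2}{2}-\tfrac{L^2}{4}+\log C,
\]
which tends to \(-\infty\) as \(L\to\infty\).

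The only point that needs care is the uniform norm tail for the GOE, which must hold for all \(N\geq3\) rather than just asymptotically. I will cite the standard Gaussian concentration bound \(\Pp(\|G_n\|\geq \E\|G_n\|+s)\leq 2e^{-ns^2/4}\) together with \(\E\|G_n\|\leq 2+o(1)\), or simply \(\E\|G_n\|\leq C_0\) for a universal \(C_0\). Any fixed constant suffices here, since it is absorbed into \(C\). Everything else is elementary bookkeeping.
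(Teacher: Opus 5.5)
Your proof is correct. The second half is essentially the paper's argument: you insert the determinant bound into \eqref{eq:exactMarkedKR}, bound $t^p|u|\le|u|$, $v_p\le 1$, the $\nu_N$ factor and $\kappa_{N,p}$, and then use the Gaussian decay in $u$.

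The first half takes a different route.
\begin{itemize}
\item The paper uses the arithmetic--geometric mean inequality on singular values to get $|\det A|\le(\|A\|_{\mathrm F}/\sqrt n)^n$. It then uses the exact law $\|G_n\|_{\mathrm F}^2\stackrel d=\frac2n\chi^2_{n(n+1)/2}$ to obtain $(\E\|G_n\|_{\mathrm F}^n)^{1/n}\le C_0\sqrt n$, and finishes with Minkowski's inequality in $L^n$. This needs nothing beyond explicit chi-square moments, and it holds verbatim for every $N\ge3$.
\item You instead bound $|\det K|\le\|K\|^{N-1}$ in operator norm. This requires genuinely random-matrix inputs: a uniform bound $\sup_n\E\|G_n\|<\infty$, Gaussian concentration of $\|G_n\|$ (Lipschitz constant $\sqrt{2/n}$ in the standardized entries), and a tail integration using $\sup_s\bigl((1+s)e^{-s^2/C}\bigr)^N\le C''^N$.
\end{itemize}
Both routes yield $[C(1+|u|)]^{N-1}$, and only the form of the bound matters here. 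The Frobenius route is therefore the more economical and self-contained one. One adjustment is needed in yours: the concentration inequality you cite is centered at $\E\|G_n\|$. So the split point should be $C_0+1$ with $C_0\ge\sup_n\E\|G_n\|$, rather than $3$. You already anticipate this.

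Two minor slips, neither affecting the conclusion:
\begin{itemize}
\item The claim $\int_{|u|>L}e^{-Nu^2/4}\,\dd u\le e^{-NL^2/4}$ for all $N\ge1$, $L\ge2$ fails for small $N$; for instance, at $N=1$, $L=2$ the left side is about $0.56$ while $e^{-1}\approx0.37$. The correct elementary bound is $\frac{4}{NL}e^{-NL^2/4}$, which is the form the paper records in \eqref{eq:explicitTail}. It has the same exponential rate, so the limsup is unchanged.
\item The $\log C$ in your final display was already absorbed when you used $f(u)\le-u^2/4$, so it can be dropped. Keeping it is harmless, since the bound still tends to $-\infty$ as $L\to\infty$.
\end{itemize}
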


\begin{proof}
For an \(n\times n\) matrix \(A\), the arithmetic--geometric mean
inequality for its singular values gives
\[
 |\det A|\leq\left(\frac{\|A\|_{\mathrm F}}{\sqrt n}\right)^n.
\]
Under \eqref{eq:GOEnormalization},
\[
 \|G_n\|_{\mathrm F}^2\stackrel d=\frac2n
 \chi^2_{n(n+1)/2},
 \qquad
 \left(\E\|G_n\|_{\mathrm F}^n\right)^{1/n}\leq C_0\sqrt n.
\]
Minkowski's inequality in \(L^n\), \eqref{eq:KNexact}, and the boundedness
of \(a_p\) on \(J\) prove \eqref{eq:globalDetBound}.

Also, \(0\leq v_p(t)\leq1\), \(|t^pu|\leq|u|\),
\(\int_J\nu_N(t)\,\dd t\leq1\), and
\(\kappa_{N,p}\leq e^{C_1N}\).  Hence \eqref{eq:exactMarkedKR} implies
\begin{align*}
 \mathcal K_N(\{|u|>L\},J)
 \leq e^{C_2N}\int_{|u|>L}
 \exp\left\{N\left(-\frac{u^2}{2}+\beta|u|
 +\log(1+|u|)\right)\right\}\dd u.
\end{align*}
For \(z\geq0\),
\[
 \beta z\leq\frac{z^2}{8}+2\beta^2,
 \qquad
 \log(1+z)\leq z\leq\frac{z^2}{8}+2.
\]
Consequently, for \(L>0\),
\begin{equation}\label{eq:explicitTail}
 \mathcal K_N(\{|u|>L\},J)
 \leq\frac4{NL}\exp\left\{N\left(C_3-\frac{L^2}{4}\right)\right\},
\end{equation}
which proves \eqref{eq:energyTail}.
\end{proof}

\begin{proposition}[Marked Kac--Rice bound]\label{prop:markedKR}
Fix \(p\in\{3,4,\ldots\}\), \(\beta>0\), and a compact interval
\(J\Subset(-1,1)\).  For every fixed closed interval \(I\subset J\)
of positive length,
\begin{equation}\label{eq:markedKR}
 \limsup_{N\to\infty}\frac1N\log\mathcal K_N(\R,I)
 \leq\sup_{u\in\R,\,t\in I}\PhiP(u,t).
\end{equation}
Since \(J\) is arbitrary, it follows in particular that, for every
\(q\in(0,1)\),
\begin{equation}\label{eq:markedband}
 \lim_{\eta\downarrow0}\limsup_{N\to\infty}
 \frac1N\log\E\MN(q,\eta)
 \leq\sup_{u\in\R}\PhiP(u,q).
\end{equation}
\end{proposition}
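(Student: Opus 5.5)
The plan is a Laplace-type bound on the exact marked identity. Start from \eqref{eq:exactMarkedRatio} and split the \(u\)-integration into \(|u|\leq L\) and \(|u|>L\). By \cref{lem:energyTail}, the tail contributes a rate that tends to \(-\infty\) as \(L\to\infty\), uniformly for \(t\in I\subset J\). This is harmless because the right-hand side of \eqref{eq:markedKR} is finite: \(\SigmaP(u)\sim-u^2/2\) at infinity dominates \(\beta t^pu\), and \(v_p\) and \(\log(1-t^2)\) are bounded on \(J\). So it suffices to treat the compact region \(|u|\leq L\), \(t\in I\).

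On that region, bound each factor of the integrand uniformly at the exponential scale.
\begin{itemize}
\item By \eqref{eq:rhoSigma}, \(\frac1N\log\rho_N(u)\leq\SigmaP(u)+o(1)\) uniformly.
\item By \cref{cor:rankoneStability}, \(\frac1N\log D_N(u,t)\leq o(1)\) uniformly.
\item By \eqref{eq:overlapdensity} and Stirling's formula, \(\nu_N(t)\leq e^{N(\frac12\log(1-t^2)+o(1))}\) uniformly on \(J\), since the prefactor is only polynomial in \(N\).
\end{itemize}
The remaining exponential factor equals \(N(\beta t^pu+\beta^2v_p(t)/2)\) exactly. Hence the integrand is at most \(\exp\{N(\PhiP(u,t)+o(1))\}\). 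Integrating over the bounded set \([-L,L]\times I\) costs only a factor \(2L|I|\), which is subexponential. Taking \(\limsup_N\) and then \(L\to\infty\) yields \eqref{eq:markedKR}. No continuity of \(\PhiP\) is needed here, only the supremum bound, though \(\PhiP\) is continuous on compacts since \(\Om\) is continuous.

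For \eqref{eq:markedband}, observe that \(\E\MN(q,\eta)=\mathcal K_N(\R,[q-\eta,q+\eta])\) by definitions \eqref{eq:markedmass} and \eqref{eq:calKdefinition}. For small \(\eta\), take \(J=[q-2\eta_0,q+2\eta_0]\Subset(-1,1)\) and \(I=[q-\eta,q+\eta]\). Then
\[
\limsup_N\tfrac1N\log\E\MN(q,\eta)\leq\sup_{u\in\R,\ |t-q|\leq\eta}\PhiP(u,t).
\]
We still need \(\sup_{u,|t-q|\leq\eta}\PhiP\to\sup_u\PhiP(u,q)\) as \(\eta\downarrow0\). This follows by localizing \(u\): uniformly for \(t\) near \(q\), \(\PhiP(u,t)\leq-u^2/4+C\) for large \(|u|\), so the suprema are attained in a fixed compact \(u\)-range. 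On that range, \(\PhiP\) is jointly uniformly continuous, so the supremum over \(t\) converges to its value at \(t=q\).

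The main obstacle is the uniformity of the determinant asymptotics in \(u\), including near the spectral edge. That is already encapsulated in \cref{lem:uniformGOEdet}, and hence in \cref{cor:rankoneStability}. Given those results, the remaining points needing care are the uniform Stirling bound on \(\nu_N\) and the tail cut-off in \(u\).
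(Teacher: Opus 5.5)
Your proposal is correct and follows the paper's proof essentially step for step. You truncate to \(|u|\leq L\) and bound \(\rho_N\), \(\nu_N\), and \(D_N\) uniformly at the exponential scale via \cref{lem:uniformGOEdet}, \cref{cor:rankoneStability}, and Stirling's formula; you then apply the Laplace upper bound to \eqref{eq:exactMarkedRatio}, remove the truncation with \cref{lem:energyTail}, and deduce \eqref{eq:markedband} by confining \(u\) to a common compact range, so that \(t\mapsto\sup_u\PhiP(u,t)\) is continuous at \(q\).
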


\begin{proof}
Fix \(L>0\).  Uniformly for \(|u|\leq L\) and \(t\in J\),
\begin{align*}
 \frac1N\log\rho_N(u)&=\SigmaP(u)+o(1),\\
 \frac1N\log\nu_N(t)&=\frac12\log(1-t^2)+o(1),\\
 \frac1N\log D_N(u,t)&\leq o(1),
\end{align*}
by \cref{lem:uniformGOEdet,cor:rankoneStability}, Stirling's formula,
and \eqref{eq:overlapdensity}.  Applying the elementary Laplace upper
bound to \eqref{eq:exactMarkedRatio} gives
\[
 \limsup_{N\to\infty}\frac1N\log\mathcal K_N([-L,L],I)
 \leq\sup_{\substack{|u|\leq L\\t\in I}}\PhiP(u,t).
\]
The energy-tail estimate \eqref{eq:energyTail} then removes the
truncation and proves \eqref{eq:markedKR}.

Finally, \(\Om(x)=O(\log(1+|x|))\) as \(|x|\to\infty\).  Thus, locally
uniformly for \(t\in(-1,1)\), the function \(\PhiP(u,t)\) tends to
\(-\infty\) quadratically as \(|u|\to\infty\).  It follows by a common
compact truncation that \(t\mapsto\sup_u\PhiP(u,t)\) is continuous.
Taking \(I=[q-\eta,q+\eta]\subset(-1,1)\) and then
\(\eta\downarrow0\) proves \eqref{eq:markedband}.
\end{proof}

\begin{remark}\label{rem:unionbound}
For every collection \(A\) of critical points,
\begin{equation}\label{eq:unionbound}
 Z_{N,\beta}\left(\bigcup_{x\in A}B(x,q,\eta)\right)
 \leq \sum_{x\in A}Z_{N,\beta}(B(x,q,\eta))
 \leq\MN(q,\eta).
\end{equation}
No separation assumption is needed for this upper bound.
\end{remark}

\section{A general-p marked transition}\label{sec:generalp}

The marked variational problem has a sharp transition throughout the
high-temperature side.  This calculation both implies the analytic
part of \cref{thm:main} and locates the obstruction for \(p\geq4\).
For \(p\geq3\), set
\begin{equation}\label{eq:generalThresholds}
 s_* = \frac{p-2}{p-1},
 \qquad
 u_{\infty,p}=2\sqrt{\frac{p-1}{p}},
 \qquad
 \beta_{\mathrm{sh}}(p)
 =\sqrt{\frac{(p-1)^{p-1}}{p(p-2)^{p-2}}}.
\end{equation}
For \(0<\beta\leq\beta_{\mathrm{sh}}(p)\), define
\begin{equation}\label{eq:DeltaGeneral}
 \Delta_{p,\beta}(s)
 =\sup_{u\in\R}\Phi_{p,\beta}(u,\sqrt s)-\frac{\beta^2}{2},
 \qquad 0\leq s<1.
\end{equation}

\begin{proposition}[General-\(p\) marked sign law]
\label{prop:general-p-marked}
Fix \(p\geq3\) and \(0<\beta\leq\beta_{\mathrm{sh}}(p)\).  Then
\begin{equation}\label{eq:generalSignLaw}
 \operatorname{sgn}\Delta_{p,\beta}(s)
 =\begin{cases}
  1, & 0\leq s<s_*,\\
  0, & s=s_*,\\
  -1, & s_*<s<1.
 \end{cases}
\end{equation}
At \(s=s_*\), the maximizing energy is unique and equals
\begin{equation}\label{eq:generalContactEnergy}
 u^*_{p,\beta}
 =\frac{2(p-1)}{p-2}\,\beta s_*^{p/2}.
\end{equation}
In particular,
\(u^*_{p,\beta_{\mathrm{sh}}(p)}=u_{\infty,p}\).
\end{proposition}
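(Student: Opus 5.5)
The plan is to compute \(\sup_u\PhiP(u,\sqrt s)\) almost explicitly by exploiting the fact that \(\SigmaP\) is an exact quadratic inside the bulk, and to dominate it by that quadratic outside. The sign question then reduces to a one-variable inequality in \(s\), which I expect to be the hardest step.

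\emph{Step 1: reduce to the quadratic extension of \(\SigmaP\).} Put \(c=\sqrt{p/(p-1)}\). For \(|x|\leq2\) one has \(\Om(x)=x^2/4-1/2\) and \(\Om'(x)=x/2\). Hence for \(|u|\leq u_{\infty,p}\) (that is, \(|cu|\leq2\)),
\[
 \SigmaP(u)=Q(u):=\tfrac12\log(p-1)-\frac{p-2}{4(p-1)}u^2 .
\]
For \(|x|>2\), \(|\Om'(x)|=(|x|-\sqrt{x^2-4})/2<|x|/2\). Integrating from \(\pm2\) gives \(\Om(x)<x^2/4-1/2\), so \(\SigmaP(u)<Q(u)\) for \(|u|>u_{\infty,p}\).

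With \(b=\beta s^{p/2}\geq0\), the function \(Q(u)+bu\) is strictly concave. It is maximized at \(\hat u(s)=\frac{2(p-1)}{p-2}\beta s^{p/2}\), with maximum \(\tfrac12\log(p-1)+\frac{p-1}{p-2}\beta^2s^p\). Therefore
\[
 \Delta_{p,\beta}(s)\leq f(s)
 :=\tfrac12\log\bigl((p-1)(1-s)\bigr)+\beta^2 s^{p-1}g(s),
\]
where I expand \(\tfrac{\beta^2}{2}(v_p-1)=-\tfrac{\beta^2}{2}\bigl(s^p+ps^{p-1}(1-s)\bigr)\). This gives the linear function
\[
 g(s)=s\Bigl(\frac{p-1}{p-2}+\frac{p-1}{2}\Bigr)-\frac p2 .
\]
Equality \(\Delta_{p,\beta}(s)=f(s)\) holds exactly when \(\hat u(s)\leq u_{\infty,p}\). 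Since \(\hat u\) is increasing in \(s\) and \(\beta\), and \(\hat u(s_*)=u^*_{p,\beta}\leq u^*_{p,\bsh}\), it suffices to check the identity \(u^*_{p,\bsh}=u_{\infty,p}\). Squaring gives \(\frac{4(p-1)^2}{(p-2)^2}\bsh^2 s_*^p=\frac{4(p-1)}{p}\), which is immediate from \eqref{eq:generalThresholds}. So equality holds on \([0,s_*]\).

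\emph{Step 2: the zero at \(s_*\) and uniqueness.} At \(s=s_*\) we have \((p-1)(1-s_*)=1\), so the log term vanishes. Also \(g(s_*)=\frac{p-1}{p-2}s_*\bigl(1+\frac{p-2}{2}\bigr)-\frac p2=0\). Hence \(f(s_*)=0\) for every \(\beta\), and \(\Delta_{p,\beta}(s_*)=0\). The maximizer at \(s_*\) is unique: \(Q+bu\) is strictly concave with its maximizer \(u^*_{p,\beta}\) inside the bulk, while outside the bulk \(\SigmaP<Q\) strictly. This yields \eqref{eq:generalContactEnergy}.

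\emph{Step 3: the signs away from \(s_*\) (main obstacle).} The function \(g\) is linear and increasing with its zero at \(s_*\), so the \(\beta^2\)-term has the sign of \(s-s_*\). The log term has the opposite sign.

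For \(s>s_*\), \(f\) is increasing in \(\beta\). It therefore suffices to show \(f_{\bsh}(s)<0\) on \((s_*,1)\), and this gives \(\Delta<0\) through the upper bound \(\Delta\leq f\).

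For \(s<s_*\), \(f\) is decreasing in \(\beta\) and \(\Delta=f\) there. It therefore suffices to show \(f_{\bsh}(s)>0\) on \([0,s_*)\); at \(s=0\) this is simply \(\tfrac12\log(p-1)>0\).

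So everything reduces to the claim that the single function \(f_{\bsh}\) is positive on \([0,s_*)\) and negative on \((s_*,1)\). I expect \(f_{\bsh}\) to have a degenerate (tangential) zero at \(s_*\), so a crude bound will not work. I would first compute \(f'_{\bsh}\) and write \(f'_{\bsh}(s)=\frac{1}{2(1-s)}\bigl(-1+\bsh^2\,(1-s)\,\partial_s[2s^{p-1}g(s)]\bigr)\), which isolates a polynomial \(P(s)=\bsh^2(1-s)\,\partial_s[2s^{p-1}g(s)]\). The aim is then to show \(P(s)\leq1\) on \([0,1)\), with equality only at \(s_*\). That would make \(f_{\bsh}\) strictly decreasing, which gives both signs at once. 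The model for this is \cref{lem:fullFE}, where \(t^{p-2}(1-t)\) is maximized at \(s_*\) and the same constant \(\bsh^2\) normalizes the maximum to one; I would try to factor \(P\) in the same way. If monotonicity fails near \(s=1\), I would fall back on using \(\log(1-s)\to-\infty\) there, together with a direct bound in the middle range.
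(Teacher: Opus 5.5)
\emph{There is a genuine gap: your Step 3 reduces the case \(s>s_*\) to a statement that is false.}

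\textbf{What works.} Your handling of \(0\le s\le s_*\) is sound and is essentially the paper's. There the maximizer of \(Q(u)+\beta s^{p/2}u\) lies in the bulk, so \(\Delta_{p,\beta}=f\), the \(\beta^2\)-coefficient is negative, and everything reduces to \(f_{\beta_{\mathrm{sh}}}>0\) on \([0,s_*)\). Your derivative idea does work on that interval: \(P\le0\) for \(s\le(p-2)/p\), and \(P\) increases to \(P(s_*)=1\) on \(((p-2)/p,s_*)\). The paper instead uses \(\log(1+nt)>nt/(1+nt)>nt(1-t)^{n+1}\). Your zero and uniqueness argument at \(s_*\) is also fine.

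\textbf{Where Step 3 fails.} With \(n=p-2\) and \(s=s_*x\),
\[
 2f_{\beta_{\mathrm{sh}}}(s_*x)=\log\bigl(1+n(1-x)\bigr)-nx^{n+1}(1-x)=:\phi(x),
\]
and \(\phi(1)=\phi'(1)=0\) but \(\phi''(1)=n^2+2n>0\). So \(s_*\) is a local \emph{minimum} of \(f_{\beta_{\mathrm{sh}}}\), which is strictly positive just to the right of \(s_*\). For \(p=3\), \(f_{\beta_{\mathrm{sh}}}(3/4)=\tfrac12\log\tfrac12+\tfrac9{16}\approx0.216\), while the true value is \(\Delta_{3,\beta_{\mathrm{sh}}}(3/4)\approx-0.011\).

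Equivalently, \(P(s)=\beta_{\mathrm{sh}}^2p(p-1)s^{p-2}(1-s)\bigl(\tfrac{p}{p-2}s-1\bigr)\) has \(P(s_*)=1\) but \((\log P)'(s_*)=p(p-1)/(p-2)>0\). The extra factor \(\tfrac{p}{p-2}s-1\) breaks the analogy with \(t^{p-2}(1-t)\) in \cref{lem:fullFE}. Your fallback near \(s=1\) cannot help, because the failure occurs at \(s_*^{+}\).

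\textbf{What is missing.} The bound \(\Sigma_p\le Q\) discards precisely the edge correction that makes \(\Delta\) negative. For \(s>s_*\) at \(\beta_{\mathrm{sh}}\), \(\hat u(s)>u_{\infty,p}\), so you must work with the exact outer branch of \(\Omega\). The paper sets \(u=u_{\infty,p}\cosh a\) and \(y=e^a\), and uses
\[
\Sigma_p(u_{\infty,p}\cosh a)=\tfrac12\log(p-1)-\tfrac{p-2}{2p}+a-\tfrac{p-1}{2p}e^{2a}+\tfrac1{2p}e^{-2a}
\]
together with the stationarity equation \((n+1)y-y^{-1}=nx^{(n+2)/2}\). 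With \(L=n+1-nx\) and \(z=x^{n/2}\), envelope differentiation gives
\[
\tfrac{\dd}{\dd x}\Delta_{p,\beta_{\mathrm{sh}}}(s_*x)=\tfrac{n}{2L}(y-zL)(zL-y^{-1})<0 \quad\text{for } x>1.
\]

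Consequently, the reduction from \(\beta<\beta_{\mathrm{sh}}\) to \(\beta_{\mathrm{sh}}\) must also be done for the true \(\Delta\), not for \(f\). The paper uses
\[
\partial_\beta\Delta_{p,\beta}(s)=s^{p/2}\bigl[u_{p,\beta}(s)-\beta s^{(p-2)/2}(p-(p-1)s)\bigr]>0 .
\]
Its positivity in the outer branch rests on \(u_{p,\beta}(s)\ge u_{\infty,p}=2\beta_{\mathrm{sh}}s_*^{(p-2)/2}\).
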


\begin{proof}
Direct integration in \eqref{eq:Omega} gives
\begin{equation}\label{eq:OmegaExplicit}
 \Om(x)=
 \begin{cases}
  \dfrac{x^2}{4}-\dfrac12, & |x|\leq2,\\[6pt]
  \dfrac{x^2}{4}-\dfrac12-
  \dfrac{|x|\sqrt{x^2-4}}{4}
  +\log\!\left(\dfrac{|x|+\sqrt{x^2-4}}2\right), & |x|\geq2.
 \end{cases}
\end{equation}
Consequently, on the quadratic branch,
\begin{equation}\label{eq:generalSigmaInside}
 \Sigma_p(u)
 =\frac12\log(p-1)-\frac{p-2}{4(p-1)}u^2,
 \qquad |u|\leq u_{\infty,p}.
\end{equation}
For \(u=u_{\infty,p}\cosh a\), \(a\geq0\), the same formula gives
\[
 -\Sigma_p'(u)
 =\frac{(p-2)\cosh a+p\sinh a}{\sqrt{p(p-1)}}.
\]
Together with \eqref{eq:generalSigmaInside}, this shows that
\(-\Sigma_p'\) is strictly increasing on \([0,\infty)\), with matching
one-sided values at \(u_{\infty,p}\).  By evenness, \(\Sigma_p\) is
strictly concave; it is also coercive by \eqref{eq:OmegaExplicit}.
Hence, after adding any linear mark, the maximizer is unique.
Since the coefficient of \(u\) in \(\Phi_{p,\beta}(u,\sqrt s)\) is
nonnegative, the unique maximizer is nonnegative.  Whenever it lies in
the quadratic branch, it is
\begin{equation}\label{eq:generalInnerOptimizer}
 u_{p,\beta}(s)
 =\frac{2(p-1)}{p-2}\,\beta s^{p/2},
\end{equation}
and substitution in \eqref{eq:PhiGeneral} gives
\begin{equation}\label{eq:general-p-inner}
 \Delta_{p,\beta}(s)
 =\frac12\log\bigl((p-1)(1-s)\bigr)
 +\frac{p\beta^2}{2}s^{p-1}
  \left(\frac{p-1}{p-2}s-1\right).
\end{equation}
The identity
\begin{equation}\label{eq:optimizerEdgeRatio}
 \frac{u_{p,\beta}(s)}{u_{\infty,p}}
 =\frac{\beta}{\beta_{\mathrm{sh}}(p)}
  \left(\frac{s}{s_*}\right)^{p/2}
\end{equation}
shows that \eqref{eq:general-p-inner} applies throughout
\(0\leq s\leq s_*\).  It immediately gives
\(\Delta_{p,\beta}(s_*)=0\) and \eqref{eq:generalContactEnergy}.

We next prove positivity for \(s<s_*\).  Put
\[
 n=p-2,
 \qquad
 x=\frac{s}{s_*}.
\]
At \(\beta=\beta_{\mathrm{sh}}(p)\), equation
\eqref{eq:general-p-inner} becomes
\begin{equation}\label{eq:generalInnerNormalized}
 2\Delta_{p,\beta_{\mathrm{sh}}(p)}(s)
 =\log\bigl(1+n(1-x)\bigr)-nx^{n+1}(1-x).
\end{equation}
For \(t=1-x\in(0,1)\),
\[
 \log(1+nt)>\frac{nt}{1+nt}>nt(1-t)^{n+1}.
\]
The second inequality follows from
\((1-t)^{-(n+1)}>1+nt\).  Thus the right-hand side of
\eqref{eq:generalInnerNormalized} is positive; the case \(t=1\) is
immediate from \(\log(1+n)>0\).  For \(s<s_*\), the
coefficient of \(\beta^2\) in \eqref{eq:general-p-inner} is negative,
and hence
\[
 \Delta_{p,\beta}(s)
 \geq\Delta_{p,\beta_{\mathrm{sh}}(p)}(s)>0.
\]

It remains to treat \(s>s_*\).  We first work at
\(\beta=\beta_{\mathrm{sh}}(p)\).  By
\eqref{eq:optimizerEdgeRatio}, the formal quadratic stationary point
then lies strictly beyond \(u_{\infty,p}\).  Strict concavity therefore
places the actual maximizer in the outer branch.  Write \(s=s_*x\) and
parametrize it by
\[
 u=u_{\infty,p}\cosh a,
 \qquad
 y=e^a.
\]
Formula \eqref{eq:OmegaExplicit} yields
\begin{equation}\label{eq:generalSigmaOutside}
 \Sigma_p(u_{\infty,p}\cosh a)
 =\frac12\log(p-1)-\frac{p-2}{2p}+a
  -\frac{p-1}{2p}e^{2a}+\frac1{2p}e^{-2a}.
\end{equation}
The critical-point equation for the maximizing \(a\) becomes
\begin{equation}\label{eq:generalOuterCritical}
 (n+1)y-y^{-1}=nx^{(n+2)/2}.
\end{equation}
Set
\[
 L=n+1-nx,
 \qquad
 z=x^{n/2}.
\]
Since \(1<x<(n+1)/n\), we have \(L>0\).  Envelope differentiation,
followed by substitution of \eqref{eq:generalOuterCritical}, gives
\begin{equation}\label{eq:general-p-outer-derivative}
 \frac{\dd}{\dd x}
 \Delta_{p,\beta_{\mathrm{sh}}(p)}(s_*x)
 =\frac{n}{2L}(y-zL)(zL-y^{-1}).
\end{equation}
For \(x>1\),
\[
 (zL)'=nz\left(\frac{L}{2x}-1\right)<0,
\]
so \(zL\) decreases from one.  Since \(y>1\), this gives
\(y-zL>0\).  To determine the other factor, define
\[
 f(v)=(n+1)v-v^{-1}-nx^{(n+2)/2}.
\]
The function \(f\) is strictly increasing, \(f(y)=0\), and a direct
calculation gives
\[
 zL\,f\bigl((zL)^{-1}\bigr)
 =(n+1)(1-x^nL)>0,
\]
because
\[
 (x^nL)'=n(n+1)x^{n-1}(1-x)<0,
\]
so \(x^nL\) decreases from one on this interval.  Hence
\(y<(zL)^{-1}\), so \(zL-y^{-1}<0\).  It follows from
\eqref{eq:general-p-outer-derivative} that
\[
 \Delta_{p,\beta_{\mathrm{sh}}(p)}(s)<0,
 \qquad s>s_*.
\]

Finally fix \(s>s_*\) and let \(u_{p,\beta}(s)\) be the maximizing
energy.  The envelope theorem gives
\begin{equation}\label{eq:generalBetaDerivative}
 \partial_\beta\Delta_{p,\beta}(s)
 =s^{p/2}\left[
  u_{p,\beta}(s)
  -\beta s^{(p-2)/2}\bigl(p-(p-1)s\bigr)
 \right].
\end{equation}
This derivative is strictly positive.  In the quadratic branch, the
claim reduces by \eqref{eq:generalInnerOptimizer} to \(s>s_*\).  In
the outer branch, \(u_{p,\beta}(s)\geq u_{\infty,p}\), while
\[
 s\longmapsto s^{(p-2)/2}\bigl(p-(p-1)s\bigr)
\]
is strictly decreasing for \(s>s_*\), and
\[
 2\beta_{\mathrm{sh}}(p)s_*^{(p-2)/2}=u_{\infty,p}.
\]
Since \(\beta\leq\beta_{\mathrm{sh}}(p)\), the bracket in
\eqref{eq:generalBetaDerivative} is again positive.  Therefore
\[
 \Delta_{p,\beta}(s)
 \leq\Delta_{p,\beta_{\mathrm{sh}}(p)}(s)<0,
\]
which completes the proof.
\end{proof}

By \cref{lem:fullFE}, the reference value \(\beta^2/2\) in
\eqref{eq:DeltaGeneral} is the full limiting free energy in this range.

\begin{proof}[Proof of \cref{thm:main}]
For \(q\leq2^{-1/2}\), \cref{prop:smallq} rules out positive
complexity.  For \(q>\sqrt{s_*}\), \cref{prop:general-p-marked} gives
a fixed gap
\[
 \gamma=\frac{\beta^2}{2}
 -\sup_{u\in\R}\Phi_{p,\beta}(u,q)>0.
\]
Continuity and coercivity give a fixed \(\eta_0>0\) such that
\[
 \sup_{\substack{u\in\R\\|t-q|\leq\eta_0}}
 \Phi_{p,\beta}(u,t)
 \leq\frac{\beta^2}{2}-\frac{3\gamma}{4}.
\]
After decreasing \(\eta_0\) if necessary so that
\([q-\eta_0,q+\eta_0]\Subset(0,1)\), \cref{prop:markedKR} implies that,
for all sufficiently large \(N\),
\[
 \E\MN(q,\eta_0)
 \leq\exp\left\{N\left(\frac{\beta^2}{2}-\frac\gamma2\right)\right\}.
\]
Markov's inequality and \eqref{eq:unionbound} therefore show, with
probability tending to one, that the union of any critical-point bands
of width at most \(\eta_0\) has free energy at most
\(\beta^2/2-\gamma/4\).  Since every admissible shattering width
\(\eta_N\) eventually satisfies \(\eta_N\leq\eta_0\), while
\cref{lem:fullFE} gives \(F_N(\beta)\to\beta^2/2\), this contradicts
condition~\textup{(4)} in \cref{def:shattering}.

Finally, when \(p=3\), one has
\(\sqrt{(p-2)/(p-1)}=2^{-1/2}\), so the two ranges in
\eqref{eq:mainRange} exhaust \(q\in(0,1)\).  Also
\(\beta_{\mathrm{sh}}(3)=\bsh\), equivalently
\(T_{\mathrm{sh}}(3)=\Tsh\), which proves the stated specialization.

The additional criterion \eqref{eq:mainCodeCriterion} and its
all-overlap consequence \eqref{eq:hotAllOverlap} are exactly
\cref{prop:hotAllOverlap}.
\end{proof}

\begin{remark}[The geometric window for \(p\geq4\)]
\label{rem:general-p-window}
The only overlap range not excluded by the small-overlap and marked
Kac--Rice arguments is
\begin{equation}\label{eq:generalWindow}
 \frac12<s\leq\frac{p-2}{p-1},
 \qquad\text{equivalently}\qquad
 \frac1{\sqrt2}<q\leq\sqrt{\frac{p-2}{p-1}}.
\end{equation}
For \(p=3\), this window is empty.  For \(p\geq4\), it is nonempty,
and at the contact point
\begin{equation}\label{eq:contactGeometryMargin}
 2s_*-1=\frac{p-3}{p-1}>0.
\end{equation}
Thus the thermodynamic contact occurs where the universal obstruction
based on negative overlaps no longer applies.  The positive margin
does not prove shattering: it only shows that the \(N+1\) cardinality
bound supplies no control there.  The spherical-code argument in
\cref{prop:hotAllOverlap} resolves every point in this window for which
\begin{equation}\label{eq:windowCodeCriterion}
 \beta^2<
 -2\mathcal R_{\mathrm{LP}}(2q^2-1)-\log(1-q^2).
\end{equation}
In particular, it resolves the whole window for
\(\beta\leq\sqrt{\log2}\).  Resolving the remaining part requires more
than one-point marked Kac--Rice, universal latitude geometry, and the
entropy estimate used here.

If \(r=0\), condition~\textup{(2)} in \cref{def:shattering} itself
forces strictly negative pairwise center overlaps.  Then
\cref{lem:obtuse} gives at most \(N+1\) centers for every \(p\), every
temperature, and every \(q\).  Thus the unresolved window concerns
positive separation parameters \(r\).
\end{remark}

\section{Endpoint optimization for the pure 3-spin model}\label{sec:optimization}

Although \cref{prop:general-p-marked} already implies the endpoint
inequality, an explicit \(p=3\) optimization identifies the order of
the tangency.  We now specialize to \(p=3\) and
\(\beta=\bsh=2/\sqrt3\).  Set
\begin{equation}\label{eq:uinfty}
 u_\infty=2\sqrt{\frac{2}{3}}=\sqrt{\frac83}.
\end{equation}
Specializing \eqref{eq:Sigma} and using \eqref{eq:OmegaExplicit},
\begin{equation}\label{eq:Sigma3}
 \Sigma_3(u)
 =\frac12+\frac12\log2-\frac{u^2}{2}
  +\Om\left(\sqrt{\frac32}\,u\right).
\end{equation}
In particular, for \(0\leq u\leq u_\infty\),
\begin{equation}\label{eq:SigmaInside}
 \Sigma_3(u)=\frac12\log2-\frac{u^2}{8}.
\end{equation}

Write \(s=q^2\).  From \eqref{eq:PhiGeneral},
\begin{equation}\label{eq:Psi}
 \Psi(u,s):=\Phi_{3,\bsh}(u,\sqrt s)
 =\Sigma_3(u)
  +\frac2{\sqrt3}s^{3/2}u
  +\frac12\log(1-s)
  +\frac23(1-3s^2+2s^3).
\end{equation}

\begin{proposition}[Endpoint variational inequality]\label{prop:variational}
For every \(s\in[1/2,1)\),
\begin{equation}\label{eq:variational}
 \sup_{u\in\R}\Psi(u,s)\leq\frac23.
\end{equation}
Equality in \eqref{eq:variational} holds if and only if \(s=1/2\); at
this value, the unique maximizing energy is \(u=u_\infty\).  In
particular, for every \(q\in(2^{-1/2},1)\),
\begin{equation}\label{eq:strictgap}
 \sup_{u\in\R}\Phi_{3,\bsh}(u,q)<\frac23.
\end{equation}
\end{proposition}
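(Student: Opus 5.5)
This is the specialization of \cref{prop:general-p-marked} to \(p=3\), \(\beta=\bsh\), so the plan is to read it off that sign law and then add the uniqueness statement.

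\textbf{Reduction.} At \(p=3\) we have \(s_*=1/2\), \(\beta_{\mathrm{sh}}(3)=\bsh\) and \(\bsh^2/2=2/3\). Since \(\Psi(u,s)=\Phi_{3,\bsh}(u,\sqrt s)\), the definition \eqref{eq:DeltaGeneral} gives
\[
 \sup_u\Psi(u,s)-\tfrac23=\Delta_{3,\bsh}(s).
\]
\cref{prop:general-p-marked} then yields \(\Delta_{3,\bsh}(1/2)=0\) and \(\Delta_{3,\bsh}(s)<0\) for \(1/2<s<1\). This proves \eqref{eq:variational}, the characterization of equality exactly at \(s=1/2\), and \eqref{eq:strictgap} with \(q=\sqrt s\).

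\textbf{The maximizer.} By the same proposition, the maximizing energy at the contact point is unique and equals \eqref{eq:generalContactEnergy}. At \(p=3\), \(\beta=\bsh\) this is
\[
 u^*=4\cdot\tfrac{2}{\sqrt3}\cdot 2^{-3/2}=\sqrt{8/3}=u_\infty,
\]
matching the last clause of that proposition.

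\textbf{Direct check.} As a self-contained verification I would recompute with \eqref{eq:SigmaInside}. The stationary point is \(u=8\bsh s^{3/2}/2=\tfrac{8}{\sqrt3}s^{3/2}\), which equals \(u_\infty\) exactly at \(s=1/2\). Substituting gives
\[
 \Psi=\tfrac12\log2-\tfrac12\log 2+\tfrac16+\tfrac23(1-\tfrac34+\tfrac14)=\tfrac23 .
\]
For \(s>1/2\), I would repeat the outer-branch argument in the proof of \cref{prop:general-p-marked} with \(n=1\). The critical equation is \(2y-y^{-1}=x^{3/2}\), and the derivative factorization \eqref{eq:general-p-outer-derivative} shows strict decrease in \(x\) on \(1<x<2\).

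\textbf{Main obstacle.} The only delicate point is the sign of the second factor \(zL-y^{-1}\). That sign is already settled in the general proof, so for the present statement I expect no genuine obstacle beyond bookkeeping of the constants.
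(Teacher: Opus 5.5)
Your reduction is correct, but it is not the route the paper takes for this proposition. You get everything from \cref{prop:general-p-marked} at $p=3$, $\beta=\bsh$. Since $\bsh^2/2=2/3$ and $\Psi(u,s)=\Phi_{3,\bsh}(u,\sqrt s)$, one has $\sup_u\Psi(u,s)-\tfrac23=\Delta_{3,\bsh}(s)$. The sign law with $s_*=1/2$ then gives the inequality, the equality case, and the strict gap. Uniqueness of the maximizer, with $u^*_{3,\bsh}=u_\infty$, is inherited from strict concavity of $\Sigma_3$. There is no circularity, since the general proposition is proved independently, and the paper itself notes that it already implies the endpoint inequality.

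The paper instead redoes the $p=3$ optimization explicitly. It excludes the quadratic branch via the sign of $\partial_u\Psi$ at $u_\infty$ and parametrizes the outer branch by $u=u_\infty\cosh a$. It then factors $\partial_aD$, solves for $\cosh a_s$ in closed form, and shows $G'(s)=3\sqrt{s+s^4}-\mathcal R(s)<0$ through the exact identity \eqref{eq:squarefactor}.

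Your route is shorter and needs no new computation, but it only delivers signs. The explicit route is self-contained for $p=3$. Through the factor $(2s-1)^3$ in \eqref{eq:squarefactor}, it also yields $G'(s)\sim-\tfrac{16}{3}(s-\tfrac12)^3$, and hence the fourth-order contact in \cref{rem:tangency}. That quantitative information is the stated reason the paper includes the direct computation.

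One arithmetic slip in your optional direct check: at $u=u_\infty$, $s=1/2$, the terms $-u_\infty^2/8+\tfrac{2}{\sqrt3}2^{-3/2}u_\infty$ contribute $-\tfrac13+\tfrac23=\tfrac13$, not $\tfrac16$. As written, your displayed sum equals $\tfrac12$ rather than $\tfrac23$. This does not affect the proof, because $\Delta_{3,\bsh}(1/2)=0$ is already supplied by the general proposition.
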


\begin{proof}
Since \(\Sigma_3\) is even and the coefficient of \(u\) in \eqref{eq:Psi} is positive, it suffices to maximize over \(u\geq0\).

For \(0\leq u\leq u_\infty\), \eqref{eq:SigmaInside} gives
\begin{equation}\label{eq:derivativeInside}
 \partial_u\Psi(u,s)
 =-\frac u4+\frac2{\sqrt3}s^{3/2}.
\end{equation}
At \(u=u_\infty\), this derivative is nonnegative for \(s\geq1/2\), with equality only at \(s=1/2\).  Hence no maximizer lies in \([0,u_\infty)\).

For \(u\geq u_\infty\), write
\begin{equation}\label{eq:hyperbolic}
 u=u_\infty\cosh a,
 \qquad a\geq0.
\end{equation}
Since \(\sqrt{3/2}\,u=2\cosh a\), \eqref{eq:OmegaExplicit} yields
\begin{equation}\label{eq:SigmaHyperbolic}
 \Sigma_3(u_\infty\cosh a)
 =\frac12\log2-\frac16+a-\frac13e^{2a}+\frac16e^{-2a}.
\end{equation}
Define
\[
 D(a,s)=\Psi(u_\infty\cosh a,s)-\frac23.
\]
Substituting \eqref{eq:SigmaHyperbolic} into \eqref{eq:Psi}, we obtain
\begin{align}
 D(a,s)
 ={}&\frac12\log\bigl(2(1-s)\bigr)-\frac16+a
 -\frac13e^{2a}+\frac16e^{-2a}\notag\\
 &+\frac{4\sqrt2}{3}s^{3/2}\cosh a
 -2s^2+\frac43s^3.
 \label{eq:D}
\end{align}
Let \(y=e^a\).  Differentiation gives the exact factorization
\begin{equation}\label{eq:Dfactor}
 \partial_aD(a,s)
 =-\frac{(y-1)(y+1)\bigl(2y^2-2\sqrt2s^{3/2}y-1\bigr)}{3y^2}.
\end{equation}
For \(s>1/2\), \(D(\cdot,s)\) first increases and then decreases on \([0,\infty)\).  Its unique maximizer \(a_s>0\) is determined by
\begin{equation}\label{eq:criticala}
 2e^{2a_s}-2\sqrt2s^{3/2}e^{a_s}-1=0.
\end{equation}
At \(s=1/2\), the maximizing point is \(a_{1/2}=0\).  Equivalently,
\begin{equation}\label{eq:criticalhyperbolic}
 \cosh a_s+3\sinh a_s=2\sqrt2s^{3/2}.
\end{equation}
Let
\begin{equation}\label{eq:Gdef}
 G(s)=D(a_s,s),
 \qquad s\in[1/2,1).
\end{equation}
A direct substitution into \eqref{eq:D} gives
\begin{equation}\label{eq:Ghalf}
 G(1/2)=0.
\end{equation}
For \(s\in(1/2,1)\), the envelope theorem and \eqref{eq:D} give
\begin{equation}\label{eq:Gprime1}
 G'(s)
 =2\sqrt2\,s^{1/2}\cosh a_s+4s^2-4s-\frac1{2(1-s)}.
\end{equation}
Solving \eqref{eq:criticala} for \(e^{a_s}\) and then for \(\cosh a_s\) gives
\begin{equation}\label{eq:coshas}
 \cosh a_s
 =\frac{3\sqrt{1+s^3}-s^{3/2}}{2\sqrt2}.
\end{equation}
Therefore
\begin{equation}\label{eq:Gprime2}
 G'(s)
 =3\sqrt{s+s^4}+3s^2-4s-\frac1{2(1-s)}.
\end{equation}
Set
\begin{equation}\label{eq:Rdef}
 \mathcal R(s)=4s-3s^2+\frac1{2(1-s)}.
\end{equation}
Then \(G'(s)=3\sqrt{s+s^4}-\mathcal R(s)\).  An exact algebraic identity is
\begin{equation}\label{eq:squarefactor}
 \mathcal R(s)^2-9(s+s^4)
 =-\frac{(2s-1)^3(12s^2-14s+1)}{4(1-s)^2}.
\end{equation}
For \(s\in(1/2,1)\),
\[
 12s^2-14s+1<0,
\]
because its roots are \((7\pm\sqrt{37})/12\), with the upper root larger than one.  Thus the right-hand side of \eqref{eq:squarefactor} is strictly positive.  Since \(\mathcal R(s)>0\),
\[
 \mathcal R(s)>3\sqrt{s+s^4},
\]
and hence
\begin{equation}\label{eq:Gnegative}
 G'(s)<0,
 \qquad s\in(1/2,1).
\end{equation}
Together with \eqref{eq:Ghalf}, this proves \(G(s)<0\) for every \(s>1/2\).  Equality occurs only at \(s=1/2\), where \(a=0\), equivalently \(u=u_\infty\).
\end{proof}

\begin{remark}[Tangency at the geometric threshold]\label{rem:tangency}
The equality point in \cref{prop:variational} is
\[
 q=\frac1{\sqrt2},
 \qquad
 u=u_\infty.
\]
This is exactly the boundary at which \cref{lem:latitude} changes from requiring negative pairwise center overlaps to permitting nonnegative ones.  Thus the thermodynamic exponent reaches the full free energy only at the point where exponential disjointness is geometrically impossible.

Writing
\[
 \gamma(q)=\frac23-\sup_{u\in\R}\Phi_{3,\bsh}(u,q),
\]
the contact is fourth order.  Indeed, Taylor expansion of
\eqref{eq:Gprime2} at \(s=1/2\) gives, as \(q\downarrow2^{-1/2}\),
\begin{equation}\label{eq:gapExpansion}
 \gamma(q)=\frac43\left(q^2-\frac12\right)^4
 +O\left(\left(q^2-\frac12\right)^5\right)
 =\frac{16}{3}\left(q-\frac1{\sqrt2}\right)^4
 +O\left(\left(q-\frac1{\sqrt2}\right)^5\right).
\end{equation}
This slow closing of the gap is another reason that the fixed-\(q\)
quantifier in \cref{def:shattering} matters.
\end{remark}

\section{Discussion and scope}\label{sec:discussion}

For every \(0<\beta\leq\beta_{\mathrm{sh}}\), the marked exponent
touches the full free energy at \(q^2=1/2\) and is strictly below it for
every \(q^2>1/2\).  The deterministic obstruction rules out positive
complexity at and below the contact latitude.  At
\(\beta=\beta_{\mathrm{sh}}\), the optimizing energy reaches
\(u_\infty\) and the contact becomes fourth order, as shown in
\cref{rem:tangency}.  Thus the pure \(3\)-spin landscape is not
shattered throughout \(T\geq T_{\mathrm{sh}}\).

The joint proof of Theorems~2.5 and~8.4 in
\cite[Section~5]{BenArousJagannath2024} assumes the strict inequality
\(\beta>\beta_{\mathrm{sh}}\).  In the \(p=3\) discussion in
\cite[Section~8]{BenArousJagannath2024}, this yields
\[
 q_{**}(\beta)>\frac1{\sqrt2},
\]
which provides the positive geometric margin used in the disjointness
argument.  At equality,
\[
 q_{**}(\beta_{\mathrm{sh}})=\frac1{\sqrt2},
\]
and the deterministic argument in \cref{sec:geometry} shows that an exponentially large disjoint family is impossible.  The marked Kac--Rice calculation shows, in addition, that moving to any fixed larger \(q\) loses a strictly positive amount of free energy.  Thus the strict inequality in the proof reflects a real endpoint transition.

The coincidence between the marked transition and the geometric cutoff
is special to \(p=3\).  For general \(p\), write \(q_{\mathrm{cont}}\)
for the contact latitude.  Then
\[
 q_{\mathrm{cont}}^2=s_* =\frac{p-2}{p-1},
\]
so for \(p\geq4\),
\[
 2q_{\mathrm{cont}}^2-1=\frac{p-3}{p-1}>0.
\]
Hence the present argument leaves the range
\(1/2<q^2\leq s_*\) unresolved only where the reverse weak inequality
to \eqref{eq:windowCodeCriterion} holds.  This remaining set is
contained in \(\sqrt{\log2}<\beta\leq\beta_{\mathrm{sh}}(p)\); for
\(\beta\leq\sqrt{\log2}\), \cref{prop:hotAllOverlap} supplies an
all-overlap obstruction.  To our knowledge, the literal
dynamical endpoint for \(p\geq4\) remains open under
\cref{def:shattering}; \cref{thm:main} records the ranges that the
present method does settle.

This paper settles fixed-overlap, critical-point-band non-shattering at
and above the dynamical temperature for the pure \(3\)-spin model.  It
does not settle the strict interior \(\Ts<T<\Tsh\), the lower endpoint
\(T=\Ts\), or the lower-temperature regime.  Accordingly, any \(p=3\)
shattered regime under \cref{def:shattering} must lie strictly below
\(\Tsh\).

The terminology and geometric definition used here should also be
distinguished from their physical antecedents.  Kirkpatrick and
Thirumalai's early analysis of mean-field \(p\)-spin models identified a
dynamical arrest above the static transition and related it to broken
replica symmetry \cite{KirkpatrickThirumalai1987Dynamics,KirkpatrickThirumalai1987Connections};
their metastable-state picture further tied the appearance of such
states to the loss of ergodicity \cite{KirkpatrickThirumalai1988Metastable}.
These works are foundational precursors of the landscape-fragmentation
scenario addressed by modern shattering results.  They do not, however,
use the fixed-overlap band criterion in \cref{def:shattering}; the
present theorem concerns that particular rigorous formulation.

Recent works study related but different notions.  For the Ising pure
\(p\)-spin model at large \(p\), Gibbs-measure shattering was proved in
\cite{GamarnikJagannathKizildag2025} and extended down to the
conjectural dynamical scale, to leading order as \(p\to\infty\), in
\cite{ElAlaoui2026}.  Those results use configuration-space clusters
and overlap-gap methods rather than the fixed-overlap critical-point
bands of \cref{def:shattering}.

El Alaoui,
Montanari, and Sellke \cite{ElAlaouiMontanariSellke2025} prove a
Gibbs-measure shattering decomposition for pure spherical models when
\(p\) is sufficiently large.  Their clusters need not be bands around
critical points, so that result neither covers \(p=3\) nor decides the
literal endpoint question under \cref{def:shattering}.

The Franz--Parisi potential was introduced in
\cite{FranzParisi1995}.  Auffinger, El Alaoui, and Sellke
\cite[Section~5]{AuffingerElAlaouiSellke2025} discuss the strict
spherical criterion
\[
 \beta^2\xi'(s)>\frac{s}{1-s},\qquad s\in(0,1).
\]
For the pure model \(\xi(s)=s^p\), this becomes
\[
 \beta^2p s^{p-2}(1-s)>1.
\]
For every \(0<\beta\leq\beta_{\mathrm{sh}}(p)\), the reverse weak
inequality holds for all \(s\in(0,1)\); equality occurs only when
\(\beta=\beta_{\mathrm{sh}}(p)\) and
\(s=(p-2)/(p-1)\).  For \(p=3\), this is \(s=1/2=q^2\), matching the
contact point in \cref{prop:general-p-marked}.  This remains only a
consistency check: equivalence between that Franz--Parisi-potential
criterion and \cref{def:shattering} is not presently established.

\section*{Acknowledgments}

The author is deeply grateful to G\'erard Ben Arous for many helpful
discussions and for his generous guidance on spherical spin glasses,
shattering, and many related topics.  The author also thanks Theodore
R.~Kirkpatrick for many helpful comments. This work was supported by Jang Young Sil
Fellow Program at KAIST.

\appendix

\section{Gaussian regression at a critical point}\label{app:regression}

For completeness, we verify \cref{lem:finitecritical} and
\eqref{eq:condmean}--\eqref{eq:rankonecov}.  By rotational invariance, take
\[
 x=\sqrt N e_N,
 \qquad
 \sigma=\sqrt N\bigl(te_N+\sqrt{1-t^2}\,e_1\bigr).
\]
View \(H_N\) as the restriction to \(\SN\) of the homogeneous Gaussian
polynomial on \(\R^N\) with covariance
\[
 \E H_N(z)H_N(w)=N^{1-p}(z\cdot w)^p.
\]
For tangent indices \(1\leq i,j\leq N-1\), let
\(B_{ij}=\partial^{\mathrm{amb}}_{ij}H_N(x)\).  Homogeneity and the
second fundamental form of \(S^{N-1}(\sqrt N)\) give
\begin{equation}\label{eq:sphericalHessianAmbient}
 (\nabla^2H_N(x))_{ij}
 =B_{ij}-\frac pN H_N(x)\delta_{ij}.
\end{equation}
Differentiating the ambient covariance shows
\begin{align}
 \operatorname{Cov}(B_{ij},B_{k\ell})
 &=\frac{p(p-1)}N
 (\delta_{ik}\delta_{j\ell}+\delta_{i\ell}\delta_{jk}),
 \label{eq:ambientHessianCov}\\
 \operatorname{Cov}(B_{ij},H_N(x))
 &=\operatorname{Cov}(B_{ij},\partial_kH_N(x))=0.
 \label{eq:ambientHessianIndependent}
\end{align}
Hence \(B\) is independent of \((H_N(x),\nabla H_N(x))\), and under
\(H_N(x)=-Nu\), \eqref{eq:sphericalHessianAmbient} is a GOE matrix with
mean \(puI_{N-1}\) and covariance \eqref{eq:conditionalHessianCov}.
With the normalization \eqref{eq:GOEnormalization}, this is exactly
\(K_N(u)\) from \eqref{eq:KNexact}, proving \cref{lem:finitecritical}.

Let \(\partial_i\) denote differentiation at \(x\) in geodesic normal coordinates in the tangent direction \(e_i\), \(1\leq i\leq N-1\).  Differentiating
\[
 C(z,w)=\E H_N(z)H_N(w)=N R(z,w)^p
\]
gives
\begin{align}
 \operatorname{Var}(H_N(x))&=N,\label{eq:cov1}\\
 \operatorname{Cov}(H_N(x),H_N(\sigma))&=Nt^p,\label{eq:cov2}\\
 \operatorname{Cov}(\partial_iH_N(x),\partial_jH_N(x))&=p\delta_{ij},\label{eq:cov3}\\
 \operatorname{Cov}(H_N(\sigma),\partial_iH_N(x))
 &=p t^{p-1}\sigma_i.\label{eq:cov4}
\end{align}
Only the \(i=1\) term in \eqref{eq:cov4} is nonzero.  Since \(H_N(x)\) is independent of \(\nabla H_N(x)\), Gaussian regression immediately yields
\[
 \E[H_N(\sigma)\mid H_N(x)=-Nu,\nabla H_N(x)=0]
 =-Nt^pu,
\]
and
\begin{align*}
 \operatorname{Var}(H_N(\sigma)\mid H_N(x),\nabla H_N(x))
 &=N-Nt^{2p}-\frac1p\sum_{i=1}^{N-1}
 \operatorname{Cov}(H_N(\sigma),\partial_iH_N(x))^2\\
 &=N\bigl(1-t^{2p}-pt^{2p-2}(1-t^2)\bigr),
\end{align*}
which is \eqref{eq:condvar}.

For the Riemannian Hessian, a second differentiation gives
\begin{equation}\label{eq:hesscovuncond}
 \operatorname{Cov}(\partial_{ij}^2H_N(x),H_N(\sigma))
 =p(p-1)t^{p-2}\frac{\sigma_i\sigma_j}{N}
  -pt^p\delta_{ij}.
\end{equation}
The curvature term is the second term on the right.  Similarly,
\begin{equation}\label{eq:hessvaluecov}
 \operatorname{Cov}(\partial_{ij}^2H_N(x),H_N(x))=-p\delta_{ij}.
\end{equation}
The Hessian is independent of the gradient.  Conditioning on \(H_N(x)\) therefore cancels the identity term in \eqref{eq:hesscovuncond}:
\begin{align*}
 &\operatorname{Cov}(\partial_{ij}^2H_N(x),H_N(\sigma)
 \mid H_N(x),\nabla H_N(x))\\
 &\quad=p(p-1)t^{p-2}\frac{\sigma_i\sigma_j}{N}.
\end{align*}
Since \(\sigma_1^2/N=1-t^2\) and \(\sigma_i=0\) for \(2\leq i\leq N-1\), this is precisely \eqref{eq:rankonecov}.

\end{document}